\documentclass[matprg]{svjour3}
\smartqed  
\usepackage{graphicx}
\usepackage{amsmath,amssymb,amstext}
\usepackage{mathrsfs}
\usepackage{multirow}
\usepackage{threeparttable}
\usepackage[multiple]{footmisc}
\usepackage{algorithm}
\usepackage{algorithmic}
\usepackage{subfigure}
\usepackage{color}
\usepackage{cases}
\usepackage{subfig}
\usepackage{booktabs}
\usepackage{diagbox}

\usepackage{xcolor}
\usepackage{enumitem}

\makeatletter
\def\@thmcountersep{.}
\makeatother
\spnewtheorem{defi}{Definition}[section]{\bfseries}{\normalfont}
\spnewtheorem{alg}{Algorithm}[section]{\bfseries}{\normalfont}
\spnewtheorem{ass}{Assumption}[section]{\bfseries}{\normalfont}
\spnewtheorem{lem}{Lemma}[section]{\bfseries}{\normalfont}
\spnewtheorem{thm}{Theorem}[section]{\bfseries}{\normalfont}
\spnewtheorem{cor}{Corollary}[section]{\bfseries}{\normalfont}
\spnewtheorem{rem}{Remark}[section]{\bfseries}{\normalfont}

\numberwithin{equation}{section}

\begin{document}

\title{A Homogeneous Second-Order Descent Ascent Algorithm for Nonconvex-Strongly Concave Minimax Problems\thanks{This work is supported by National Key  R \& D Program of China (Nos. 2025YFA1017801 and 2025YFA1017800), the National Natural Science Foundation of China under the grant 12471294.}
}

\titlerunning{A HSDA for NC-SC Minimax Problems}        

\author{Jia-Hao Chen \and Zi Xu \and Hui-Ling Zhang}


\institute{Jia-Hao Chen\at
	Department of Mathematics,  College of Sciences, Shanghai University, Shanghai 200444, P.R.China.\\ \email{chenjiahao@shu.edu.cn}
	\and	Zi Xu\at
	Department of Mathematics, College of Sciences, Shanghai University, Shanghai 200444, P.R.China. \\
	Corresponding author. \email{xuzi@shu.edu.cn}           
		\and Hui-Ling Zhang\at
	LSEC, ICMSEC, Academy of Mathematics and Systems Science, Chinese Academy of Sciences, Beijing 100190, China. \\ \email{zhanghl1209@shu.edu.cn}
}

\date{Received: date / Accepted: date}

\maketitle

\begin{abstract}
This paper introduces a novel Homogeneous Second-order Descent Ascent (HSDA) algorithm for nonconvex-strongly concave minimax optimization problems. At each iteration, HSDA uniquely computes a search direction by solving a homogenized eigenvalue subproblem built from the gradient and Hessian of the objective function. This formulation guarantees a descent direction with sufficient negative curvature even in near-positive-semidefinite Hessian regimes—a key feature that enhances escape from saddle points. We prove that HSDA finds an $\mathcal{O}(\varepsilon,\sqrt{\varepsilon})$-second-order stationary point within $\tilde{\mathcal{O}}(\varepsilon^{-3/2})$ iterations, matching the optimal $\varepsilon$-order iteration complexity among second-order methods for this problem class. To address large-scale applications, we further design an inexact variant (IHSDA) that preserves the single-loop structure while solving the subproblem approximately via a Lanczos procedure. With high probability, IHSDA achieves the same $\tilde{\mathcal{O}}(\varepsilon^{-3/2})$ iteration complexity and attains an $\mathcal{O}(\varepsilon,\sqrt{\varepsilon})$-second-order stationary point, with the total Hessian‑vector product cost bounded by $\tilde{\mathcal{O}}(\varepsilon^{-7/4})$. Experiments on synthetic minimax problems and adversarial training tasks confirm the practical effectiveness and robustness of the proposed algorithms.
\keywords{Homogeneous second-order descent ascent algorithm, nonconvex-strongly concave minimax problem, minimax optimization, adversarial training}
\subclass{90C47, 90C26, 90C30}
\end{abstract}

\section{Introduction}

In this paper, we consider the following unconstrained minimax problem:
\begin{equation}\label{eq:minimax}
    \min_{x \in \mathbb{R}^{n}} \max_{y \in \mathbb{R}^{m}} f(x,y),\tag{P}
\end{equation}
where  $f(x,y):\mathbb{R}^{n} \times \mathbb{R}^{m} \to \mathbb{R} $ is a continuously differentiable function, which is strongly concave in $y$, but possibly nonconvex in $x$. For convenience, we denote
\begin{equation}\label{eq:value}
    \mathcal{F}(x) := \max_{y \in \mathbb{R}^{m}} f(x,y).
\end{equation}
Such a structure captures a wide range of machine learning applications, including adversarial training and distributionally robust optimization \cite{gao2023distributionally,sanjabi2018convergence,sinha2017certifying},  reinforcement learning, domain adaptation, and AUC maximization \cite{ganin2016domain,qiu2020single,ying2016stochastic}.

To solve the minimax problem \eqref{eq:minimax}, three main classes of optimization algorithms have been developed: zeroth-order, first-order, and second-order methods, which utilize the function value, gradient, and Hessian of the objective function, respectively. Compared to zeroth- and first-order approaches, second-order methods have garnered considerable attention owing to their faster convergence rates. Moreover, they are more effective at escaping saddle points and avoiding poor local minima, thereby increasing the likelihood of converging to a globally optimal solution. This paper focuses on second-order optimization algorithms for solving \eqref{eq:minimax}.

For nonconvex-strongly concave minimax problems, first-order algorithms can obtain an $\varepsilon$-first-order stationary point in 
$\tilde{\mathcal{O}}(\kappa_y^2 \varepsilon^{-2})$ iterations \cite{jin2020local,lin2020gradient,lu2020hybrid,rafique2022weakly,xu2023unified}, 
where $\kappa_y$ denotes the condition number of $f(x,\cdot)$. 
Acceleration frameworks further improve the iteration complexity to
$\tilde{\mathcal{O}}(\sqrt{\kappa_y}\,\varepsilon^{-2})$ \cite{lin2020near,zhang2021complexity,Li2021ComplexityLB}. 

There are few studies on second-order algorithms for solving nonconvex-strongly concave minimax optimization problems \eqref{eq:minimax}. Existing second-order algorithms can be divided into two categories, i.e., cubic regularization Newton type algorithms \cite{luo2022finding,chen2021cubic} and trust-region type algorithms \cite{yao2024two,wang2025gradient}.  Building upon the cubic regularization (CR) framework, Luo et al. \cite{luo2022finding} proposed the Minimax Cubic Newton (MCN) algorithm, which alternates between a cubic-regularized Newton step in the minimization variable and an ascent step in the maximization variable, achieving an iteration complexity of $\mathcal{O}(\varepsilon^{-3/2})$ to reach an $\mathcal{O}(\varepsilon,\sqrt{\varepsilon})$-second-order stationary point. They further introduced an inexact variant (IMCN) that solves the cubic subproblem via gradient-based iterations and approximates Hessian inverse operations using Chebyshev polynomial expansions, relying solely on Hessian-vector products. Within the same line of work, Chen et al. \cite{chen2021cubic} developed the Inexact Cubic-LocalMinimax (ICLM) algorithm, which attains the same order of iteration complexity. In the trust-region family of methods, Yao and Xu \cite{yao2024two} proposed MINIMAX-TR, a fixed-radius inexact trust-region method that finds an $\mathcal{O}(\varepsilon,\sqrt{\varepsilon})$-second-order stationary point within $\mathcal{O}(\varepsilon^{-3/2})$ iterations. To enhance practical performance, they also designed MINIMAX-TRACE, which adaptively adjusts the trust-region radius through contraction and expansion steps while maintaining the same theoretical iteration complexity. More recently, Wang and Xu \cite{wang2025gradient} introduced a gradient norm regularized trust-region method (GRTR) and a Levenberg-Marquardt type negative-curvature method (LMNegCur) for nonconvex-strongly concave minimax problems. GRTR achieves an iteration complexity of $\tilde{\mathcal{O}}(\varepsilon^{-3/2})$, and its inexact variant IGRTR preserves this rate while reducing Hessian-vector product computations to be $\tilde{\mathcal{O}}(\varepsilon^{-7/4})$. LMNegCur and its inexact counterpart ILMNegCur offer analogous convergence guarantees.

Collectively, these advances highlight the ongoing development of second-order methods tailored to nonconvex-strongly concave minimax optimization and motivate the algorithm design pursued in this work.

\subsection{Contributions}
In this paper, we propose a homogeneous second-order descent ascent (HSDA) algorithm whose outer iteration solves a single homogenized eigenvalue subproblem—constructed from the gradient and Hessian of the value function—to obtain an iteration direction. This homogenized formulation guarantees, even when the Hessian is nearly positive semidefinite, a descent direction with sufficient negative curvature for the value function. We prove that HSDA finds an $\mathcal{O}(\varepsilon,\sqrt{\varepsilon})$-second-order stationary point within  $\tilde{\mathcal{O}}(\varepsilon^{-3/2})$ iterations, matching the best known iteration complexity for second-order methods in this setting \cite{chen2021cubic,luo2022finding,wang2025gradient}. For large-scale problems, we develop an inexact version (IHSDA) that approximately solves the homogenized eigenvalue subproblem via a Lanczos procedure with a carefully controlled residual. IHSDA preserves the same $\tilde{\mathcal{O}}(\varepsilon^{-3/2})$ outer iteration complexity and, with high probability, reaches an 
$\mathcal{O}(\varepsilon,\sqrt{\varepsilon})$-second-order stationary point, while its total Hessian-vector product computations are upper bounded by $\tilde{\mathcal{O}}(\varepsilon^{-7/4})$.

Unlike recent inexact trust-region schemes (e.g., IGRTR and ILMNegCur \cite{wang2025gradient}) that require solving both a regularized Newton system and an $n$-dimensional extremal-eigenvalue problem, while HSDA only solves a single $(n+1)$-dimensional extremal-eigenvalue problem in a lifted space at each iteration. We further show (in Section \ref{sec_ihsda}) that the homogenized eigenvalue subproblems typically exhibit better conditioning than the regularized Newton/trust-region systems underlying IGRTR/ILMNegCur. Hence, HSDA and IHSDA offer an alternative second-order framework whose inner subproblem is structurally simpler, yet achieves the same $\tilde{\mathcal{O}}(\varepsilon^{-7/4})$ Hessian-vector product computations as  IGRTR and ILMNegCur.

{\bfseries Notation}.
We adopt the following notation throughout the paper: $[a; b]$ and $[a, b]$ denote vertical and horizontal concatenation, respectively; $\operatorname{sgn}(\cdot)$ is the sign function, defined by $\operatorname{sgn}(a) = -1$ if $a < 0$ and $\operatorname{sgn}(a) = 1$ if $a \geqslant 0$. For a vector $a \in \mathbb{R}^n$ and $0 \leqslant j \leqslant n$, $a_{[1:j]}$ denotes the subvector formed by its first $j$ entries. The symbol $\|\cdot\|$ denotes the Euclidean norm for vectors and the induced $\ell_2$ operator norm for matrices. The eigenvalues of a matrix $A \in \mathbb{R}^{n \times n}$ are ordered as $\lambda_1(A), \lambda_2(A), \dots, \lambda_{\max}(A)$ in nondecreasing order. The identity matrix of dimension $n$ is written as $I_n$, or simply $I$ when the dimension is clear. For a function $f(x, y): \mathbb{R}^n \times \mathbb{R}^m \to \mathbb{R}$, $\nabla_x f(x, y)$ and $\nabla_y f(x, y)$ denote its partial gradients with respect to $x$ and $y$, respectively; the full gradient is $\nabla f(x, y) := (\nabla_x f(x, y), \nabla_y f(x, y))$. Second-order partial derivatives are denoted by $\nabla_{xx}^2 f(x, y)$, $\nabla_{xy}^2 f(x, y)$, $\nabla_{yx}^2 f(x, y)$, and $\nabla_{yy}^2 f(x, y)$. Complexity notations $\mathcal{O}(\cdot), \Omega(\cdot), \Theta(\cdot)$ hide only absolute constants independent of problem parameters, while $\tilde{\mathcal{O}}(\cdot)$ additionally hides logarithmic factors. We also define the value function $\mathcal{F}(x) := \max_{y \in \mathbb{R}^m} f(x, y)$, the maximizer $y^\ast(x) := \arg\max_{y \in \mathbb{R}^m} f(x, y)$, the partial gradient $g(x, y) := \nabla_x f(x, y)$, and  $H(x, y) := \bigl[ \nabla_{xx}^2 f - \nabla_{xy}^2 f (\nabla_{yy}^2 f)^{-1} \nabla_{yx}^2 f \bigr](x, y)$.

\section{A Homogeneous Second-Order Descent Ascent Algorithm}\label{HSDA_al}
In this section, we propose a Homogeneous Second-order Descent Ascent (HSDA) algorithm for solving the nonconvex-strongly concave minimax problem \eqref{eq:minimax}. HSDA is inspired by the Homogeneous Second-order Descent Method (HSODM)~\cite{zhang2025homogeneous}, a second-order framework originally designed for unconstrained minimization problems of the form $\min_{x\in\mathbb{R}^n} \mathcal{F}(x)$. At each iteration, HSODM obtains a search direction by solving a homogenized eigenvalue subproblem of the form:
\begin{equation*}
  \begin{array}{ll}
    \mathop{\min}\limits_{\|[u;v]\|\leqslant 1} &
    \begin{bmatrix} u \\ v \end{bmatrix}^{\!\top}
    \begin{bmatrix}
      \nabla^2\mathcal{F}(x) & \nabla \mathcal{F}(x)\\[2pt]
      \nabla \mathcal{F}(x)^{\top} & -\alpha
    \end{bmatrix}
    \begin{bmatrix} u \\ v \end{bmatrix}, \\
  \end{array}
\end{equation*}
where $\alpha \geqslant 0$ is a prescribed parameter. 
As demonstrated in~\cite{zhang2025homogeneous}, the resulting subproblem is an eigenvalue problem, where the optimal solution $[u_t; v_t]$ is a unit eigenvector associated with the smallest eigenvalue of the homogenized matrix. Building on this framework, we propose a HSDA algorithm as a generalized and inexact extension of HSODM tailored to the minimax setting in \eqref{eq:minimax} with $\mathcal{F}(x) = \max_{y\in\mathbb{R}^{m}} f(x,y)$. At each iteration, HSDA incorporates two key algorithmic components:
\begin{itemize}
  \item Inexact inner maximization via gradient ascent: Given $x_t$, we perform $N_t$ steps of
  Nesterov's accelerated gradient ascent~\cite{nesterov2018lectures} on $y$ to obtain an approximate maximizer
  \begin{equation*}\label{eq:inner-y}
    y_t \approx y^\ast(x_t) = \arg\max_{y\in\mathbb{R}^m} f(x_t,y).
  \end{equation*}
  Using $y_t$, we then construct the corresponding inexact first- and second-order information of $\mathcal{F}$ at $x_t$:
  \begin{equation*}\label{eq:gH}
    g_t := g(x_t,y_t) , \quad H_t := H(x_t,y_t) .
  \end{equation*}
  \item Direction generation via homogenized eigenvector computation: Given the inexact gradient estimate $g_t$, Hessian estimate $H_t$, and a parameter $\alpha>0$, we compute the search direction $[u_t; v_t]$ with $u_t\in \mathbb{R}^n$ and $v_t\in \mathbb{R}$ by solving the following homogenized eigenvalue subproblem: 
  \begin{equation}\label{eq:subprob}
  \min_{\|[u;v]\|\leqslant 1}
  \begin{bmatrix}u\\ v\end{bmatrix}^{\!\top}
  G_t(\alpha)
  \begin{bmatrix}u\\ v\end{bmatrix},
  \qquad
  G_t(\alpha):=
  \begin{bmatrix}
    H_t & g_t\\
    g_t^{\top} & -\alpha
  \end{bmatrix}.
\end{equation}
The search direction $s_t$ is then defined as:
\begin{equation}\label{eq:dk-classification-clean}
  s_t =
  \begin{cases}
    \dfrac{u_t}{v_t}, & \qquad |v_t|\geqslant \omega ,
      \\[9pt]
    \operatorname{sgn}\!\big(-g_t^{\top}u_t\big)u_t, & \qquad |v_t|<\omega .
  \end{cases}
\end{equation}
\end{itemize}
The detailed algorithm is formally stated in Algorithm~\ref{alg:HSDA}.
\begin{algorithm}[!htbp]
  \caption{A Homogeneous Second-Order Descent Ascent (HSDA) Algorithm}
  \label{alg:HSDA}
  \begin{algorithmic}
    \STATE{\textbf{Step 1}: Input  $x_1$, $y_0$, $\eta_{1}>0$, $\eta_{2}>0$, $\omega\in(0,1/2)$, $\{N_t\geqslant 1\}$, $\varepsilon>0$, $\alpha>0$,  $\Lambda>0$ and set $t=1$.}
    \STATE{\textbf{Step 2}: Update $y_t$:}
    \STATE{\quad\textbf{(2a)}: Set $i=0$, $y_i^{t}= \tilde y_i^{t}=y_{t-1}$.}
    \STATE{\quad\textbf{(2b)}: Update $y_i^{t}$ and $\tilde y_i^{t}$:
    \begin{align*}
      y_{i+1}^{t} &= \tilde y_i^{t} + \eta_{1} \nabla_y f\big(x_t,\tilde y_i^{t}\big),\\
      \tilde y_{i+1}^{t} &= y_{i+1}^{t} + \eta_{2}\big(y_{i+1}^{t}-y_i^{t}\big).
    \end{align*}}
    \STATE{\quad\textbf{(2c)}: If $i \geqslant N_t-1$, set $y_t = y_{N_t}^{t}$; otherwise set $i = i+1$ and go to Step (2b).}
    \STATE{\textbf{Step 3}: Compute
    \[
      g_t = \nabla_x f(x_t,y_t),\qquad
      H_t = \big[\nabla^2_{xx}f - \nabla^2_{xy}f(\nabla^2_{yy}f)^{-1}\nabla^2_{yx}f\big](x_t,y_t)
    \]
    and solve the following homogeneous subproblem to obtain $[u_t;v_t]$:
    	 \begin{equation*}
    	[u_t;v_t]=\arg\min_{\|[u;v]\|\leqslant 1}
    		\begin{bmatrix}u\\ v\end{bmatrix}^{\!\top}
   \begin{bmatrix}
   	H_t & g_t\\[2pt]
   	g_t^{\top} & -\alpha
   \end{bmatrix}
    		\begin{bmatrix}u\\ v\end{bmatrix}.
    	\end{equation*}}
    \STATE{\textbf{Step 4}: Update the direction $s_t$:
    \begin{equation*}
    	s_t =
    	\begin{cases}
    		\dfrac{u_t}{v_t}, & \qquad |v_t|\geqslant \omega ,
    		\\[9pt]
    		\operatorname{sgn}\!\big(-g_t^{\top}u_t\big)u_t, & \qquad |v_t|<\omega .
    	\end{cases}
    \end{equation*}
    }
  \STATE{\textbf{Step 5}:  If some stationary condition or stopping criterion is satisfied, set $x_{t+1} = x_t + s_t$ and \textbf{terminate}. Otherwise, compute $\tau_t = \Lambda/\|s_t\|$, update $x_{t+1} = x_t + \tau_t s_t$, set $t = t+1$ and go to Step 2.}
  \end{algorithmic}
\end{algorithm}
In the following subsection, we establish that the proposed HSDA algorithm attains an $\mathcal{O}(\varepsilon,\sqrt{\varepsilon})$-second-order stationary point of $\mathcal{F}(x)$ for problem \eqref{eq:minimax} with an iteration complexity of $\tilde{\mathcal{O}}(\varepsilon^{-3/2})$.
\subsection{Complexity Analysis}
Throughout this paper, we work under the following assumptions on $f(x,y)$, which ensure that the value function $\mathcal{F}(x)$ is well-defined and has the required  smoothness.
\begin{ass}\label{as.1}
\(f(x,y)\) satisfies the following assumptions:
\begin{itemize}
	\item[(1)] The function \(f(x,y)\) is assumed to be \(\mu\)-strongly concave in $y$ for any fixed $x$.
	\item[(2)] We assume the gradient \(\nabla f(\cdot,\cdot)\) is \(\ell_1\)-Lipschitz continuous in the joint variable: for all \((x,y),(x',y')\in\mathbb{R}^n\times\mathbb{R}^m\), 
	\begin{equation*}
		\big\|\nabla f(x,y)-\nabla f(x',y')\big\| \leqslant \ell_1 \big\|(x,y)-(x',y')\big\|.
	\end{equation*}
	\item[(3)]  We assume that the Hessian blocks \(\nabla^2_{xx}f(x,y)\), \(\nabla^2_{xy}f(x,y)\), \(\nabla^2_{yx}f(x,y)\), and \(\nabla^2_{yy}f(x,y)\) are \(\ell_2\)-Lipschitz continuous.
	\item[(4)] The value function \(\mathcal{F}(x)=\max_{y\in\mathbb{R}^m} f(x,y)\) has a finite lower bound \(\mathcal{F}_{\inf}\).
\end{itemize}
\end{ass}
Under these assumptions, Lemma~\ref{lem2.1} establishes the Lipschitz continuity of 
\(\nabla\mathcal{F}\) and \(\nabla^{2}\mathcal{F}\).
\begin{lem}[\cite{chen2021cubic}]\label{lem2.1}
 Under Assumption~\ref{as.1}, the following properties hold:
 \begin{itemize}
 	\item[(1)] The maximizer \(y^*(x)=\arg\max_{y\in\mathbb{R}^m} f(x,y)\)
 	is well-defined for every $x$ and is \(\kappa\)-Lipschitz continuous.
 	\item[(2)] \(\nabla\mathcal{F}(x)=\nabla_x f\big(x,y^*(x)\big)\),
 	and is \(L_1\)-Lipschitz continuous with \(L_1:=(\kappa+1)\ell_1\) and \(\kappa:=\ell_1/\mu\).
 	\item[(3)] \(H(x,y)\) is \(L_H\)-Lipschitz continuous with \(L_H:=\ell_2(1+\kappa)^2\) and \(\|H(x,y)\|\leqslant L_1\).
 	\item[(4)] It holds that 	\(\nabla^2\mathcal{F}(x)=H\big(x,y^*(x)\big)\). Consequently, the Hessian \(\nabla^2\mathcal{F}(x)\) is Lipschitz continuous with constant \(L_2:=\ell_2(1+\kappa)^3\).
\end{itemize}	
\end{lem}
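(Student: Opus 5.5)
I would prove the four items in order, since each builds on the previous one. The main tools are the implicit function theorem and the strong concavity of $f(x,\cdot)$.

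\emph{Item (1).} Fix $x$. By Assumption~\ref{as.1}(1), $f(x,\cdot)$ is $\mu$-strongly concave and continuous on $\mathbb{R}^m$, so it has a unique maximizer $y^*(x)$, characterized by $\nabla_y f(x,y^*(x))=0$. For Lipschitz continuity, take $x,x'$ and write $y=y^*(x)$, $y'=y^*(x')$. Strong monotonicity of $-\nabla_y f(x,\cdot)$ gives
\[
\mu\|y-y'\|^2\leqslant \langle \nabla_y f(x,y')-\nabla_y f(x,y),\,y-y'\rangle=\langle \nabla_y f(x,y')-\nabla_y f(x',y'),\,y-y'\rangle .
\]
Here we used $\nabla_y f(x,y)=0=\nabla_y f(x',y')$. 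The right-hand side is at most $\ell_1\|x-x'\|\,\|y-y'\|$ by Assumption~\ref{as.1}(2), so $\|y-y'\|\leqslant\kappa\|x-x'\|$.

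\emph{Item (2).} The formula $\nabla\mathcal F(x)=\nabla_x f(x,y^*(x))$ is Danskin's theorem, since the maximizer is unique. Alternatively, differentiate $f(x,y^*(x))$ and use $\nabla_y f(x,y^*(x))=0$; this needs $y^*$ to be differentiable, which the implicit function theorem gives because $\nabla^2_{yy}f\preceq-\mu I$ is invertible. The Lipschitz bound follows by the triangle inequality:
\[
\|\nabla_x f(x,y^*(x))-\nabla_x f(x',y^*(x'))\|\leqslant \ell_1(\|x-x'\|+\|y^*(x)-y^*(x')\|)\leqslant(1+\kappa)\ell_1\|x-x'\| .
\]

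\emph{Item (4) formula.} Differentiating $\nabla_y f(x,y^*(x))=0$ gives $Dy^*(x)=-(\nabla^2_{yy}f)^{-1}\nabla^2_{yx}f$, with all blocks evaluated at $(x,y^*(x))$. Differentiating $\nabla\mathcal F(x)=\nabla_x f(x,y^*(x))$ then gives
\[
\nabla^2\mathcal F(x)=\nabla^2_{xx}f+\nabla^2_{xy}f\,Dy^*(x)=H(x,y^*(x)).
\]

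\emph{Item (3).} To bound $\|H\|$, use that the Hessian blocks satisfy $\|\nabla^2_{xx}f\|,\|\nabla^2_{xy}f\|\leqslant\ell_1$ by Assumption~\ref{as.1}(2), and that $\|(\nabla^2_{yy}f)^{-1}\|\leqslant1/\mu$. Then $\|H\|\leqslant\ell_1+\ell_1^2/\mu=(1+\kappa)\ell_1=L_1$.

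For the Lipschitz constant of $H$, I would expand the difference of the Schur complements term by term. Write $A=\nabla^2_{xy}f$, $B=(\nabla^2_{yy}f)^{-1}$, $C=\nabla^2_{yx}f$. Then
\[
ABC-A'B'C'=(A-A')BC+A'(B-B')C+A'B'(C-C') .
\]
For the inverse term use $B-B'=B\big(\nabla^2_{yy}f'-\nabla^2_{yy}f\big)B'$, so $\|B-B'\|\leqslant \ell_2\|\cdot\|/\mu^2$. Collecting terms, the constant is $\ell_2(1+2\kappa+\kappa^2)=\ell_2(1+\kappa)^2$ with respect to the joint variable $(x,y)$. The delicate point is how "$\ell_2$-Lipschitz" is interpreted. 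If it is measured in $\|(x,y)-(x',y')\|$, the bound is exactly $(1+\kappa)^2\ell_2$. If each block is assumed Lipschitz in $x$ and $y$ separately, one may pick up extra constant factors, and I would absorb them into the stated constant by using the joint-norm convention.

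\emph{Item (4) Lipschitz constant.} Compose the previous two results:
\[
\|\nabla^2\mathcal F(x)-\nabla^2\mathcal F(x')\|\leqslant L_H\,\|(x,y^*(x))-(x',y^*(x'))\|\leqslant L_H(1+\kappa)\|x-x'\| .
\]
This gives $L_2=\ell_2(1+\kappa)^3$.

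The main obstacle is the constant bookkeeping in the Schur-complement perturbation. The estimate $\|(x-x',y-y')\|\leqslant\|x-x'\|+\|y-y'\|$ is slightly loose, but it still yields the stated constants, so I would use it.
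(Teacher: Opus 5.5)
Your proposal is correct. The paper does not prove this lemma; it imports it from \cite{chen2021cubic}, where it is established essentially along your lines: strong monotonicity for the $\kappa$-Lipschitz bound on $y^*$, implicit-function differentiation for $\nabla\mathcal{F}$ and $\nabla^2\mathcal{F}=H(x,y^*(x))$, the three-term Schur-complement perturbation giving $\ell_2(1+2\kappa+\kappa^2)$, and composition with $y^*$ for the extra factor $1+\kappa$. Your joint-norm reading of Assumption~\ref{as.1}(3) also matches how the paper later uses $L_H$, namely $\|H_{t+1}-H_t\|\leqslant L_H(\|x_{t+1}-x_t\|+\|y_{t+1}-y_t\|)$.
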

Lemma \ref{lem2.1} directly yields the upper bounds presented in Lemma~\ref{lem2.2}. 
\begin{lem}[\cite{nesterov2018lectures}]\label{lem2.2}
Under Assumption~\ref{as.1}, for all $x,x'\in\mathbb{R}^n$ the gradient and Hessian of 
$\mathcal{F}$ satisfy the following inequalities:
\begin{equation*}
\begin{aligned}
&\left\| \nabla \mathcal{F}(x') - \nabla \mathcal{F}(x) - \nabla^{2}\mathcal{F}(x)(x'-x) \right\|
\leqslant \frac{L_{2}}{2}\|x'-x\|^{2},\\
&\left| \mathcal{F}(x') - \mathcal{F}(x) - \nabla \mathcal{F}(x)^{\top}(x'-x)
- \frac{1}{2}(x'-x)^{\top}\nabla^{2}\mathcal{F}(x)(x'-x) \right|
\leqslant \frac{L_{2}}{6}\|x'-x\|^{3},
\end{aligned}
\end{equation*}
where $L_{2}$ is the Lipschitz constant of $\nabla^{2}\mathcal{F}$ given in Lemma~\ref{lem2.1}.
\end{lem}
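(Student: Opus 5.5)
The plan is to derive both inequalities of Lemma~\ref{lem2.2} from the $L_2$-Lipschitz continuity of $\nabla^2\mathcal{F}$ in Lemma~\ref{lem2.1}(4), using integral forms of Taylor's theorem. First I will record that $\mathcal{F}$ is twice continuously differentiable. By Lemma~\ref{lem2.1}(2), $\nabla\mathcal{F}(x)=\nabla_x f(x,y^*(x))$. By the implicit function theorem applied to $\nabla_y f(x,y^*(x))=0$, which is legitimate since $\nabla^2_{yy}f\preceq-\mu I$ is invertible, the map $y^*$ is $C^1$, and Lemma~\ref{lem2.1}(4) identifies $\nabla^2\mathcal{F}(x)=H(x,y^*(x))$, a continuous function. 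Consequently the fundamental theorem of calculus applies along the segment $x+\theta(x'-x)$, $\theta\in[0,1]$. Throughout I write $d:=x'-x$.

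For the gradient bound, I start from the identity
\[
\nabla\mathcal{F}(x')-\nabla\mathcal{F}(x)=\int_0^1\nabla^2\mathcal{F}(x+\theta d)\,d\,\mathrm{d}\theta .
\]
Subtracting $\nabla^2\mathcal{F}(x)d$ leaves an integrand $\bigl[\nabla^2\mathcal{F}(x+\theta d)-\nabla^2\mathcal{F}(x)\bigr]d$. The Lipschitz continuity of the Hessian bounds its norm by $L_2\theta\|d\|^2$. Integrating over $\theta\in[0,1]$ then gives $\tfrac{L_2}{2}\|d\|^2$.

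For the function-value bound, I define $\phi(\theta):=\mathcal{F}(x+\theta d)$, so that $\phi'(\theta)=\nabla\mathcal{F}(x+\theta d)^\top d$. This gives
\[
\mathcal{F}(x')-\mathcal{F}(x)-\nabla\mathcal{F}(x)^\top d=\int_0^1\bigl[\nabla\mathcal{F}(x+\theta d)-\nabla\mathcal{F}(x)\bigr]^\top d\,\mathrm{d}\theta .
\]
Since $\int_0^1\theta\,\mathrm{d}\theta=\tfrac12$, I can write $\tfrac12 d^\top\nabla^2\mathcal{F}(x)d=\int_0^1\theta\, d^\top\nabla^2\mathcal{F}(x)d\,\mathrm{d}\theta$ and subtract it inside the integral. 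The integrand becomes $\bigl[\nabla\mathcal{F}(x+\theta d)-\nabla\mathcal{F}(x)-\nabla^2\mathcal{F}(x)(\theta d)\bigr]^\top d$. Applying the gradient bound just proved, with $x'$ replaced by $x+\theta d$, together with Cauchy--Schwarz, bounds this integrand in absolute value by $\tfrac{L_2}{2}\theta^2\|d\|^3$. Integrating yields $\tfrac{L_2}{6}\|d\|^3$.

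The only point requiring care is the regularity of $\mathcal{F}$, namely that it is $C^2$ so the integral representations are valid. Once Lemma~\ref{lem2.1} is granted, this follows directly from parts (2) and (4). The rest is the standard argument of \cite{nesterov2018lectures}.
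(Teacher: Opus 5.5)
Your proposal is correct and follows the same route as the paper. The paper gives no separate proof: it takes the $L_2$-Lipschitz continuity of $\nabla^2\mathcal{F}$ from Lemma~\ref{lem2.1}(4) and invokes the standard Taylor-integral bounds from \cite{nesterov2018lectures}, which is exactly the argument you write out, including reusing the gradient bound at $x+\theta d$ to obtain the $\tfrac{L_2}{6}\|x'-x\|^3$ estimate.
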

We next recall the standard definitions of first- and second-order stationary points used in minimax optimization~\cite{luo2022finding}.

\begin{definition}\label{def.1}
	A point \(x\) is an \(\varepsilon\)-first-order stationary point of  \(\mathcal{F}(x)\)
 when \(\|\nabla \mathcal{F}(x)\|\leqslant \varepsilon\).
\end{definition}

\begin{definition}\label{def.2}
A point \(x\) is an \(\mathcal{O}(\varepsilon,\sqrt{\varepsilon})\)-second-order stationary point of \(\mathcal{F}(x)\) when
\begin{equation*}
\|\nabla \mathcal{F}(x)\|\leqslant c_{1}\varepsilon \quad\text{and}\quad \nabla^2\mathcal{F}(x)\succcurlyeq -c_{2}\sqrt{\varepsilon}I,
\end{equation*}
where positive constants $c_{1}$ and $c_{2}$ do not depend on \(\varepsilon\).
\end{definition}
The following lemma demonstrates that by selecting an appropriate step size and performing sufficiently many inner gradient-ascent updates on $y$, we can obtain approximations of the gradient and Hessian of the value function within a desired accuracy.
\begin{lem}[\cite{wang2025gradient}]\label{lem.2} 
Suppose that Assumption~\ref{as.1} holds. For any \(\varepsilon_1,\varepsilon_2>0\), set the inner ascent step sizes as \(\eta_1=1/\ell_1\), \(\eta_{2}= (\sqrt{\kappa}-1)/(\sqrt{\kappa}+1)\)
and define $A=\min\left\{\frac{\varepsilon_1}{\ell_1},\ \frac{\varepsilon_2}{2L_H}\right\}$, where \(L_H\) is the Lipschitz constant of \(H(x,y)\) given in Lemma~\ref{lem2.1}.
If the iteration counts \(\{N_t\}\) for \(y\)-updates in Algorithm~\ref{alg:HSDA} satisfy
\begin{equation*}
\begin{aligned}
N_1 & \geqslant 2\sqrt{\kappa}
\log\!\left(\frac{\sqrt{\kappa+1}\|y_0 - y^*(x_1)\|}{A}\right),\\
N_t & \geqslant 2\sqrt{\kappa}
\log\!\left(\frac{\sqrt{\kappa+1}\big(A+\kappa\|x_{t}-x_{t-1}\|\big)}{A}\right),
\qquad t\geqslant 2,
\end{aligned}
\end{equation*}
then for every \(t\geqslant 1\) the following error bounds hold:
\begin{equation*}
	\left\|y_t-y^*\left(x_t\right)\right\| \leqslant A,
	\qquad
	\big\|\nabla\mathcal{F}(x_t)-g_t\big\|\leqslant \varepsilon_1,
	\qquad
	\big\|\nabla^2\mathcal{F}(x_t)-H_t\big\|\leqslant \varepsilon_2.
\end{equation*}
\end{lem}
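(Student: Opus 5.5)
We argue by induction on $t$, showing that $\|y_t-y^*(x_t)\|\leqslant A$ at every iteration. The gradient and Hessian bounds then follow from Lemma~\ref{lem2.1}. The per-iteration engine is the standard linear convergence of Nesterov's accelerated gradient method on the $\mu$-strongly concave, $\ell_1$-smooth function $f(x_t,\cdot)$. With $\eta_1=1/\ell_1$ and $\eta_2=(\sqrt{\kappa}-1)/(\sqrt{\kappa}+1)$, the bound in \cite{nesterov2018lectures} gives, for the unique maximizer $y^*(x_t)$ (well-defined by Lemma~\ref{lem2.1}(1)),
\begin{equation*}
\|y^t_{N}-y^*(x_t)\|^2\leqslant (\kappa+1)\Bigl(1-\tfrac{1}{\sqrt{\kappa}}\Bigr)^{N}\|y^t_0-y^*(x_t)\|^2 .
\end{equation*}
This comes from $\frac{\mu}{2}\|y-y^*\|^2\leqslant f(x_t,y^*)-f(x_t,y)$ together with the potential-function estimate $f(x_t,y^*)-f(x_t,y_N)\leqslant (1-1/\sqrt\kappa)^N\frac{\ell_1+\mu}{2}\|y_0-y^*\|^2$. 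Since $(1-1/\sqrt\kappa)^{N/2}\leqslant e^{-N/(2\sqrt\kappa)}$, we get
\begin{equation*}
\|y_t-y^*(x_t)\|\leqslant \sqrt{\kappa+1}\,e^{-N_t/(2\sqrt\kappa)}\|y_{t-1}-y^*(x_t)\|.
\end{equation*}
It therefore suffices that $N_t\geqslant 2\sqrt\kappa\log\bigl(\sqrt{\kappa+1}\|y_{t-1}-y^*(x_t)\|/A\bigr)$.

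For $t=1$ the warm start is $y_0$, and the hypothesis on $N_1$ is exactly this condition. For $t\geqslant2$ the warm start is $y_{t-1}$. By the triangle inequality and the $\kappa$-Lipschitz continuity of $y^*$ (Lemma~\ref{lem2.1}(1)),
\begin{equation*}
\|y_{t-1}-y^*(x_t)\|\leqslant \|y_{t-1}-y^*(x_{t-1})\|+\kappa\|x_t-x_{t-1}\|\leqslant A+\kappa\|x_t-x_{t-1}\|,
\end{equation*}
where the last step uses the induction hypothesis. The stated lower bound on $N_t$ then yields $\|y_t-y^*(x_t)\|\leqslant A$. If the argument of the logarithm is below $1$, the requirement $N_t\geqslant 1$ suffices trivially.

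With $\|y_t-y^*(x_t)\|\leqslant A$ in hand, the two error bounds follow.
\begin{itemize}
\item \emph{Gradient.} By Lemma~\ref{lem2.1}(2), $\nabla\mathcal{F}(x_t)=\nabla_x f(x_t,y^*(x_t))$, so the $\ell_1$-Lipschitz continuity of $\nabla f$ gives
\begin{equation*}
\|\nabla\mathcal{F}(x_t)-g_t\|\leqslant \ell_1 A\leqslant\varepsilon_1 .
\end{equation*}
\item \emph{Hessian.} By Lemma~\ref{lem2.1}(4), $\nabla^2\mathcal{F}(x_t)=H(x_t,y^*(x_t))$, and the $L_H$-Lipschitz continuity of $H$ (Lemma~\ref{lem2.1}(3)) gives
\begin{equation*}
\|\nabla^2\mathcal{F}(x_t)-H_t\|\leqslant L_H A\leqslant \varepsilon_2/2\leqslant\varepsilon_2 .
\end{equation*}
\end{itemize}

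The main obstacle is pinning down the exact constants in the accelerated-gradient contraction, namely the factor $\sqrt{\kappa+1}$ and the rate $e^{-N/(2\sqrt\kappa)}$ in distance rather than function value, so that they match the stated $N_t$. I would first derive this from Nesterov's estimate-sequence bound for the constant-momentum scheme, converting the function-value gap to distance via strong concavity. If the constant there comes out slightly different, for instance $\sqrt{2\kappa}$, I would absorb it by adjusting the logarithm, since only the form of the recursion matters.
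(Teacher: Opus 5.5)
Your argument is correct and is the standard proof behind this lemma. The paper gives no proof of its own and only cites the result. Nesterov's estimate for the constant-momentum scheme with $\gamma_0=\mu$ gives exactly $\|y^t_N-y^*(x_t)\|^2\leqslant(\kappa+1)(1-1/\sqrt{\kappa})^N\|y^t_0-y^*(x_t)\|^2$, so the constants in the $N_t$ conditions match as stated and your closing hedge about adjusting them is unnecessary. Your warm-start induction through the $\kappa$-Lipschitz continuity of $y^*$, followed by the Lipschitz estimates for $g$ and $H$ (which even give $\varepsilon_2/2$), completes the proof.
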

The following lemma characterizes the optimality conditions for the homogenized eigenvalue subproblem \eqref{eq:subprob}.
\begin{lem}[\cite{zhang2025homogeneous}]\label{lem.4}
The vector $[u_t; v_t]$ is a solution to the homogenized eigenvalue subproblem \eqref{eq:subprob} if and only if there exists a dual scalar $\delta_t$ such that	
\begin{subequations}\label{opt}
\begin{align}
\begin{bmatrix}
H_t + \delta_t I & g_t \\
g_t^{\top} & -\alpha + \delta_t
\end{bmatrix} &\succcurlyeq 0, \label{opt.1}\\
(H_t+\delta_t I)u_t = -v_tg_t,
\quad
g_t^{\top} u_t &= v_t(\alpha-\delta_t), \label{opt.2}\\
\delta_t \geqslant \alpha > 0, \quad \big\|[u_t; v_t]\big\| &= 1. \label{opt.3}
\end{align}
\end{subequations}
Furthermore, $-\delta_t$ equals the smallest eigenvalue of the homogenized matrix $G_t(\alpha)$, i.e., $-\delta_t = \lambda_{1}\!\big(G_t(\alpha)\big),$ and $[u_t; v_t]$ is a corresponding unit eigenvector. Moreover, when $g_t \neq 0$, the inequality $\delta_t \geqslant \alpha > 0$ in \eqref{opt.3} can be strengthened to the strict form $\delta_t > \alpha > 0$.
\end{lem}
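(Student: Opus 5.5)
The plan is to treat \eqref{eq:subprob} as a trust-region-type problem: a quadratic form minimized over the unit ball in $\mathbb{R}^{n+1}$ with no linear term. I will invoke the classical characterization of global solutions of trust-region subproblems (Moré--Sorensen) and then use the homogeneity of the objective to sharpen it.

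\textbf{Step 1: apply Moré--Sorensen.} For $\min_{\|z\|\leqslant 1} z^\top G z$ with $G=G_t(\alpha)$, a point $z$ is optimal if and only if there is $\delta\geqslant 0$ such that
\[
G+\delta I\succcurlyeq 0,\qquad (G+\delta I)z=0,\qquad \delta(1-\|z\|)=0 .
\]
Writing $z=[u_t;v_t]$, the block equation $(G+\delta I)z=0$ is exactly \eqref{opt.2}. Indeed, the first block row gives $(H_t+\delta_t I)u_t=-v_t g_t$, and the second gives $g_t^\top u_t+(\delta_t-\alpha)v_t=0$. The semidefiniteness condition is \eqref{opt.1}.

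\textbf{Step 2: identify $\delta_t$ and show the constraint is active.} The matrix $G$ has bottom-right entry $-\alpha<0$, so $e_{n+1}^\top G e_{n+1}=-\alpha<0$. Hence $\lambda_1(G)\leqslant -\alpha<0$, and the optimal value is strictly negative. By homogeneity, $z^\top Gz$ scales as $\|z\|^2$, so any minimizer has $\|z\|=1$. Therefore $\|[u_t;v_t]\|=1$ and the optimal value equals $\lambda_1(G)$.

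Since $(G+\delta_t I)z=0$ with $z\neq 0$, $-\delta_t$ is an eigenvalue of $G$. Together with $G+\delta_t I\succcurlyeq 0$, this forces $-\delta_t=\lambda_1(G)$. It follows that $\delta_t=-\lambda_1(G)\geqslant\alpha$, which gives \eqref{opt.3}, and that $[u_t;v_t]$ is a unit eigenvector for $\lambda_1(G)$.

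The converse direction is immediate. Given \eqref{opt}, the vector $z$ is a unit vector with $Gz=-\delta_t z$ and $G\succcurlyeq-\delta_t I$, so $z^\top Gz=-\delta_t=\lambda_1(G)$, which is the minimum over the ball.

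\textbf{Step 3: the strict inequality when $g_t\neq 0$.} This is the only delicate point. Suppose $\delta_t=\alpha$. Then the $(n+1,n+1)$ entry of $G+\delta_t I$ is zero. A positive semidefinite matrix with a zero diagonal entry must have zero off-diagonal entries in that row and column (consider a $2\times 2$ principal minor). This forces $g_t=0$, a contradiction. Hence $\delta_t>\alpha$ whenever $g_t\neq 0$.

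I expect no real obstacle in this argument. The main care needed is justifying activeness of the norm constraint, which Step 2 handles through the strictly negative optimal value, and the principal-minor argument in Step 3.
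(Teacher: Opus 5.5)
Your proof is correct and takes essentially the same route as the source. The paper gives no proof of its own and imports the lemma from \cite{zhang2025homogeneous}, where it is derived from the standard (Mor\'e--Sorensen) trust-region optimality conditions: the constraint is shown to be active via $\lambda_1(G_t(\alpha))\leqslant-\alpha<0$, and $-\delta_t$ is identified as the smallest eigenvalue, exactly as in your Steps 1--2, while your zero-diagonal principal-minor argument for $\delta_t>\alpha$ when $g_t\neq 0$ and your direct proof of the converse (using $G_t(\alpha)\succcurlyeq-\delta_t I$ with $\delta_t\geqslant 0$) are both sound.
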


We now proceed to analyze the iteration complexity of the proposed HSDA algorithm. Our analysis begins by establishing a descent property for the case where $|v_t| \leqslant \sqrt{1/(1+\Lambda^{2})}$.
\begin{lem}\label{lem:step-decrease}
Suppose that Assumption~\ref{as.1} holds. Assume $\Lambda \leqslant \sqrt{2}/2$ 
and $\omega\in(0,1/2)$. For the case $|v_t| \leqslant \sqrt{1/(1+\Lambda^{2})}$, we have 
\begin{equation}
\mathcal{F}(x_{t+1})-\mathcal{F}(x_t)\leqslant \Lambda \varepsilon_1+\frac{\Lambda^{2}}{2}(\varepsilon_2-\alpha) +\frac{L_{2}}{6} \Lambda^{3}.
\end{equation}
\end{lem}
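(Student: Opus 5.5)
The plan is to combine the cubic upper bound of Lemma~\ref{lem2.2} with the optimality conditions of Lemma~\ref{lem.4}, using the fact that every step has length exactly $\Lambda$. Write $d_t := x_{t+1}-x_t = \tau_t s_t$, so $\|d_t\| = \Lambda$. Since $|v_t| \leqslant \sqrt{1/(1+\Lambda^2)} < 1$ and $\|[u_t;v_t]\| = 1$, we have $u_t \neq 0$. In both branches of \eqref{eq:dk-classification-clean} the direction $s_t$ is a signed multiple of $u_t$. Hence
\[
d_t = \Lambda\,\sigma_t\,\frac{u_t}{\|u_t\|},
\]
where $\sigma_t = \operatorname{sgn}(v_t)$ if $|v_t| \geqslant \omega$ and $\sigma_t = \operatorname{sgn}(-g_t^\top u_t)$ otherwise.

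\textbf{Reduction to the model.} Lemma~\ref{lem2.2} gives
\[
\mathcal{F}(x_{t+1}) - \mathcal{F}(x_t) \leqslant \nabla\mathcal{F}(x_t)^\top d_t + \tfrac12 d_t^\top \nabla^2\mathcal{F}(x_t) d_t + \tfrac{L_2}{6}\Lambda^3 .
\]
Replace $\nabla\mathcal{F}(x_t)$ by $g_t$ and $\nabla^2\mathcal{F}(x_t)$ by $H_t$. By Lemma~\ref{lem.2} and Cauchy--Schwarz, this costs at most $\Lambda\varepsilon_1 + \frac{\Lambda^2}{2}\varepsilon_2$. It therefore suffices to show
\[
m_t := g_t^\top d_t + \tfrac12 d_t^\top H_t d_t \leqslant -\tfrac{\alpha}{2}\Lambda^2 .
\]

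\textbf{Evaluating $m_t$ via the optimality conditions.} From \eqref{opt.2}, $g_t^\top u_t = v_t(\alpha - \delta_t)$, and $\delta_t \geqslant \alpha$ by \eqref{opt.3}.
\begin{itemize}
\item In the branch $|v_t| \geqslant \omega$, $\sigma_t g_t^\top u_t = -|v_t|(\delta_t - \alpha)$.
\item In the other branch, $\sigma_t g_t^\top u_t = -|g_t^\top u_t| = -|v_t|(\delta_t - \alpha)$ as well.
\end{itemize}
So $g_t^\top d_t = -\Lambda |v_t|(\delta_t-\alpha)/\|u_t\|$ in both cases.

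For the curvature term, multiply $(H_t+\delta_t I)u_t = -v_t g_t$ by $u_t^\top$. This gives $u_t^\top H_t u_t = -\delta_t\|u_t\|^2 + v_t^2(\delta_t - \alpha)$. Set $r := |v_t|/\|u_t\|$. Then
\[
m_t + \tfrac{\alpha}{2}\Lambda^2 = (\delta_t - \alpha)\Big[-\Lambda r - \tfrac{\Lambda^2}{2} + \tfrac{\Lambda^2}{2} r^2\Big] = \tfrac{\Lambda}{2}(\delta_t-\alpha)\big(\Lambda r^2 - 2r - \Lambda\big).
\]

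\textbf{Sign check, which I expect to be the crux.} The naive approach bounds the quadratic term alone. That would require $v_t^2 \leqslant 1/2$, which the hypothesis does not give. The point is to keep the linear term $-\Lambda r$ in play. Because $\|u_t\|^2 = 1 - v_t^2$, the hypothesis $v_t^2 \leqslant 1/(1+\Lambda^2)$ is equivalent to $r \leqslant 1/\Lambda$. The quadratic $\Lambda r^2 - 2r - \Lambda$ is nonpositive for $0 \leqslant r \leqslant (1+\sqrt{1+\Lambda^2})/\Lambda$, and this interval contains $[0,1/\Lambda]$. Since $\delta_t - \alpha \geqslant 0$, we get $m_t \leqslant -\frac{\alpha}{2}\Lambda^2$. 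Combining with the reduction step yields the claimed bound.

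The assumptions $\Lambda \leqslant \sqrt2/2$ and $\omega < 1/2$ do not seem necessary for this particular inequality. I would note that they are carried along for consistency with the later lemmas.
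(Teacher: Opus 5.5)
Your proof is correct. It shares the paper's skeleton: Lemma~\ref{lem2.2} and the error bounds of Lemma~\ref{lem.2} reduce the claim to $\tau_t g_t^\top s_t+\frac{\tau_t^2}{2}s_t^\top H_t s_t\leqslant-\frac{\alpha}{2}\Lambda^2$, which is then checked using \eqref{opt.2}--\eqref{opt.3}. The difference is in how that last inequality is verified.

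The paper first proves $\|s_t\|\geqslant\Lambda$, i.e.\ $\tau_t\in(0,1]$. This is exactly where $\Lambda\leqslant\sqrt2/2$ and $\omega<1/2$ enter, via $\sqrt{1-\omega^2}\geqslant\sqrt3/2\geqslant\Lambda$ in the branch $|v_t|<\omega$. It then treats three branches one at a time: $|v_t|<\omega$ with $g_t\neq0$, the same with $g_t=0$, and $\omega\leqslant|v_t|\leqslant\sqrt{1/(1+\Lambda^2)}$. In each it shows a cross term of the form $(\tau_t-\tau_t^2/2)(\alpha-\delta_t)(\cdot)$ is nonpositive, and then discards the slack $-\frac{\Lambda^2}{2}(\delta_t-\alpha)$ using $\delta_t\geqslant\alpha$.

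You instead observe that both branches give $d_t=\pm\Lambda u_t/\|u_t\|$, with the sign chosen so that $g_t^\top d_t=-\Lambda r(\delta_t-\alpha)$. This yields the exact identity $m_t+\frac{\alpha}{2}\Lambda^2=\frac{\Lambda}{2}(\delta_t-\alpha)(\Lambda r^2-2r-\Lambda)$, and the hypothesis becomes the clean condition $r\leqslant 1/\Lambda$.

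Your route buys three things:
\begin{enumerate}
\item It is shorter.
\item It covers $g_t=0$ with no separate case.
\item It shows, as you note, that this lemma holds for every $\Lambda>0$ and $\omega\in(0,1)$.
\end{enumerate}
In the branch $|v_t|\geqslant\omega$ one has $\tau_t=\Lambda r$, so the paper's sign requirement $\tau_t\leqslant 2$ is a cruder form of your root check, because it throws away the $-\frac{\Lambda^2}{2}(\delta_t-\alpha)$ term.

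The paper's route buys the intermediate fact $\tau_t\in(0,1]$, which it reuses in both cases of Lemma~\ref{cor6.1} for the inexact method. So the extra hypotheses are not idle in the paper as a whole, even though they are superfluous for this particular inequality.
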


\begin{proof}
We first prove that when $|v_t| \leqslant \sqrt{1/(1+\Lambda^{2})}$, the direction $s_t$ satisfies:
\begin{equation}\label{s_t_value}
	\|s_t\| \geqslant \Lambda.
\end{equation}
Since when $|v_t| < \omega$, according to $s_t = \operatorname{sgn}(-g_t^{\top}u_t)u_t$ in Algorithm~\ref{alg:HSDA} and~\eqref{opt.3}, we have
\[
\|s_t\| = \|u_t\|= \sqrt{1 - |v_t|^2}\geqslant \sqrt{1 - \omega^2}\geqslant {\sqrt{3}}/{2}\geqslant \Lambda .
\]
When $\omega \leqslant |v_t| \leqslant \sqrt{1/(1+\Lambda^2)}$, according to Algorithm~\ref{alg:HSDA} and ~\eqref{opt.3}, we get
\[\|s_t\|= \|{u_t}/{v_t}\|= {\sqrt{1 - |v_t|^2}}/{|v_t|}\geqslant \Lambda . \]
Therefore, \eqref{s_t_value} holds, and thus $\tau_t = \Lambda / \|s_t\| \in (0,1].$	

Denote $E_t := \tau_t g_{t}^{\top} s_t + \frac{\tau_t^2}{2} s_t^{\top} H_{t} s_t$. 
It follows from Step 5 in Algorithm~\ref{alg:HSDA}, the $L_{2}$-Lipschitz continuity
of $\nabla^2 \mathcal{F}(x)$, and Lemma~\ref{lem.2} that
	\begin{align}\label{guding.3}
        &~\mathcal{F}(x_{t+1}) - \mathcal{F}(x_t) \nonumber\\
        \leqslant&~ \tau_t \nabla \mathcal{F}(x_{t})^{\top} s_t 
        + \frac{\tau_t^2}{2} s_t^{\top} \nabla^2 \mathcal{F}(x_{t}) s_t 
        + \frac{L_{2}}{6} \tau_t^3 \|s_t\|^3 \nonumber\\
        =&~ E_t 
        + \tau_t (\nabla \mathcal{F}(x_{t}) - g_{t})^{\top} s_t
        + \frac{\tau_t^2}{2} s_t^{\top} \big(\nabla^2 \mathcal{F}(x_{t}) - H_{t}\big) s_t 
        + \frac{L_{2}}{6} \tau_t^3 \|s_t\|^3 \nonumber\\
        \leqslant&~ E_t 
        + \tau_t \|\nabla \mathcal{F}(x_{t}) - g_{t}\| \|s_t\|
        + \frac{\tau_t^2}{2} \|\nabla^2 \mathcal{F}(x_{t}) - H_{t}\| \|s_t\|^{2} 
        + \frac{L_{2}}{6} \tau_t^3 \|s_t\|^3 \nonumber\\
        \leqslant&~ E_t 
        + \varepsilon_1 \Lambda  
        + \frac{\varepsilon_2}{2} \Lambda^{2} 
        + \frac{L_{2}}{6} \Lambda^{3}. 
    \end{align}
When $|v_t|<\omega$ and $g_t \ne 0$, by \eqref{opt.2},~\eqref{opt.3} and $s_t = \operatorname{sgn}(-g_t^{\top}u_t)u_t$ in Algorithm~\ref{alg:HSDA},
we obtain
\begin{equation}
    \label{guding.1}
    s_t^{\top} H_t s_t = -\delta_t \|s_t\|^2 - v_t^2 (\alpha - \delta_t), 
    \quad
    g_t^{\top} s_t = |v_t| (\alpha - \delta_t).
\end{equation}
Therefore, using \eqref{guding.1}, we get
\begin{equation*}
    \begin{aligned}
        E_t 
        &= \tau_t g_{t}^{\top} s_t + \frac{\tau_t^2}{2} s_t^{\top} H_{t} s_t\\
        &= \tau_t |v_t|(\alpha - \delta_t)
           - \frac{\tau_t^2}{2} \delta_t \|s_t\|^2
           - \frac{\tau_t^2}{2} v_t^2 (\alpha - \delta_t) \\
        &\leqslant \tau_t v_t^2 (\alpha - \delta_t)
           - \frac{\tau_t^2}{2} \delta_t \|s_t\|^2
           - \frac{\tau_t^2}{2} v_t^2 (\alpha - \delta_t) \\
        &= \Bigl(\tau_t - \frac{\tau_t^2}{2}\Bigr) v_t^2 (\alpha - \delta_t)
           - \frac{\tau_t^2}{2} \delta_t \|s_t\|^2,
    \end{aligned}
\end{equation*}
where the inequalities are derived from $|v_t|<\omega<1$ and $\alpha-\delta_t<0$. Since $\tau_t \in (0,1]$, we have $\tau_t - \tau_t^2/2 \geqslant 0$. Further combining this with \eqref{opt.3}, we get
\begin{equation}
    \label{guding.2}
\left(\tau_t - \frac{\tau_t^2}{2} \right) (\alpha - \delta_t) \leqslant 0.
\end{equation}
Furthermore, using $\tau_t=\Lambda/\|s_t\|$ and \eqref{guding.2}, we get
\begin{equation}
    \label{guding.4}
    E_t 
    \leqslant - \frac{\tau_t^2}{2} \delta_t \|s_t\|^2
    = -\delta_t \frac{\Lambda^2}{2}
    \leqslant - \frac{\Lambda^2}{2} \alpha.
\end{equation}
When $|v_t|<\omega$ and $g_t = 0$, then $E_t = \frac{\tau_t^2}{2} s_t^{\top} H_t s_t$.
In this case, it can be similarly proven that:
\begin{equation}
  \label{guding.4.1}
  E_t \leqslant - \frac{\Lambda^2}{2}\,\alpha .
\end{equation}

When $\omega \leqslant |v_t| \leqslant \sqrt{1/(1+\Lambda^2)}$,  we have $s_t = u_t / v_t$. Substituting this into \eqref{opt.2} yields
\begin{equation}
    \label{guding.5}
    s_t^{\top} H_t s_t = - g_t^{\top} s_t - \delta_t \|s_t\|^2, 
    \qquad 
    g_t^{\top} s_t = \alpha - \delta_t \leqslant 0.
\end{equation}
Since $\tau_t \in (0,1]$, it holds that $\tau_t - \tau_t^2/2 \geqslant 0$, and consequently
\begin{equation}
    \label{guding.6}
    \left( \tau_t - \frac{\tau_t^2}{2} \right) g_t^{\top} s_t \leqslant 0.
\end{equation}
Using \eqref{guding.5} and \eqref{guding.6}, we obtain
\begin{equation}
    \label{guding.8}
    \begin{aligned}
        E_t
        &= \tau_t g_{t}^{\top} s_t + \frac{\tau_t^2}{2} s_t^{\top} H_{t} s_t
        = \Bigl(\tau_t - \frac{\tau_t^2}{2}\Bigr) g_t^{\top} s_t
           - \frac{\tau_t^2}{2} \delta_t \|s_t\|^2\\
        &\leqslant - \frac{\tau_t^2}{2} \delta_t \|s_t\|^2
        \leqslant -\frac{\Lambda^2}{2} \alpha.
    \end{aligned}
\end{equation}
Combining \eqref{guding.3} with \eqref{guding.4}, \eqref{guding.4.1} and \eqref{guding.8} completes the proof.
\end{proof}

We now consider the case where $|v_t| > \sqrt{1/(1+\Lambda^{2})}$.
The subsequent lemma establishes explicit bounds for both $\|\nabla \mathcal{F}(x_{t+1})\|$ and $\nabla^2\mathcal{F}(x_{t+1})$.

\begin{lem}\label{lem2.6}
Under Assumption~\ref{as.1} and for the case $|v_t|>\sqrt{1/(1+\Lambda^{2})}$, 
we further assume $\Lambda \leqslant \sqrt{2}/2$. 
Then the following holds:
\begin{subequations}\label{eq:stationary-bounds}
  \begin{align}
    \big\|\nabla \mathcal{F}(x_{t+1})\big\|
    &\leqslant
      2(L_1 + \alpha)\Lambda^3
      + \frac{L_2}{2}\Lambda^2
      + (\varepsilon_2 + \alpha)\Lambda
      + 3 \varepsilon_1 ,
      \label{eq:stationary-bounds-grad}\\[2pt]
    \nabla^2 \mathcal{F}(x_{t+1})
    &\succcurlyeq
      -\Big\{2(L_1 + \alpha)\Lambda^2
      + \alpha + \varepsilon_{2}
      + L_H\big[(1+\kappa)\Lambda + 2A\big]\Big\}I .
      \label{eq:stationary-bounds-hess}
  \end{align}
\end{subequations}
\end{lem}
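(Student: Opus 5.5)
The plan is to use the optimality conditions of Lemma~\ref{lem.4} to show that in this case the step $s_t$ is a short, nearly exact regularized Newton step, and then transfer stationarity from the model at $x_t$ to the true function at $x_{t+1}$ using Lemma~\ref{lem2.2} and Lemma~\ref{lem.2}.

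\emph{Step 1: set-up.} With $\Lambda\leqslant\sqrt{2}/2$ we have $\sqrt{1/(1+\Lambda^2)}\geqslant\sqrt{2/3}>1/2>\omega$. Hence $|v_t|\geqslant\omega$ and $s_t=u_t/v_t$. Moreover
\[
\|s_t\|=\sqrt{1-v_t^2}/|v_t|<\Lambda .
\]
This is exactly the regime in which Step 5 terminates and sets $x_{t+1}=x_t+s_t$, so $\|x_{t+1}-x_t\|<\Lambda$. Dividing \eqref{opt.2} by $v_t$ gives
\[
(H_t+\delta_t I)s_t=-g_t,\qquad g_t^\top s_t=\alpha-\delta_t .
\]

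\emph{Step 2: bound $\delta_t$ (the key step).} From the first relation and $\|H_t\|\leqslant L_1$ (Lemma~\ref{lem2.1}(3)) we get
\[
\|g_t\|\leqslant (L_1+\delta_t)\Lambda .
\]
The second relation then yields
\[
\delta_t=\alpha-g_t^\top s_t\leqslant \alpha+\|g_t\|\Lambda\leqslant \alpha+(L_1+\delta_t)\Lambda^2 .
\]
Solving for $\delta_t$ gives $\delta_t(1-\Lambda^2)\leqslant \alpha+L_1\Lambda^2$. Using $1-\Lambda^2\geqslant 1/2$, this becomes
\[
\delta_t\leqslant \alpha+\frac{(\alpha+L_1)\Lambda^2}{1-\Lambda^2}\leqslant \alpha+2(L_1+\alpha)\Lambda^2 .
\]
This self-referential bound is the main point of the argument. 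It is where the assumption $\Lambda\leqslant\sqrt{2}/2$ enters and where the constant $2(L_1+\alpha)$ comes from.

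\emph{Step 3: gradient bound \eqref{eq:stationary-bounds-grad}.} By the first inequality of Lemma~\ref{lem2.2},
\[
\|\nabla\mathcal{F}(x_{t+1})\|\leqslant \|\nabla\mathcal{F}(x_t)+\nabla^2\mathcal{F}(x_t)s_t\|+\tfrac{L_2}{2}\Lambda^2 .
\]
Replace $\nabla\mathcal{F}(x_t)$ and $\nabla^2\mathcal{F}(x_t)$ by $g_t$ and $H_t$ using Lemma~\ref{lem.2}; this costs at most $\varepsilon_1+\varepsilon_2\Lambda$. Then use $g_t+H_ts_t=-\delta_t s_t$ together with Step 2:
\[
\|\delta_t s_t\|\leqslant \alpha\Lambda+2(L_1+\alpha)\Lambda^3 .
\]
Summing these terms gives a bound stronger than \eqref{eq:stationary-bounds-grad}; the stated $3\varepsilon_1$ is simply a looser constant.

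\emph{Step 4: Hessian bound \eqref{eq:stationary-bounds-hess}.} The top-left block of \eqref{opt.1} gives $H_t\succcurlyeq-\delta_t I$. By Lemma~\ref{lem2.1}(4), $\nabla^2\mathcal{F}(x_{t+1})=H(x_{t+1},y^*(x_{t+1}))$. The $L_H$-Lipschitz continuity of $H$ gives
\[
\|\nabla^2\mathcal{F}(x_{t+1})-H_t\|\leqslant L_H\big(\|x_{t+1}-x_t\|+\|y^*(x_{t+1})-y_t\|\big).
\]
The $y$-term splits as $\|y^*(x_{t+1})-y^*(x_t)\|+\|y^*(x_t)-y_t\|\leqslant\kappa\Lambda+A$, using Lemma~\ref{lem2.1}(1) and Lemma~\ref{lem.2}. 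Hence the difference is at most $L_H[(1+\kappa)\Lambda+A]$. Combining this with $H_t\succcurlyeq-\delta_tI$ and the bound on $\delta_t$ from Step 2 gives \eqref{eq:stationary-bounds-hess}. The extra $\varepsilon_2$ and the second $A$ only loosen the bound, and would appear naturally if one routes the comparison through $\nabla^2\mathcal{F}(x_t)$ instead.
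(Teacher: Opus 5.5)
Your proof is correct, but it reaches the key estimate by a different route from the paper's.

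\textbf{Bounding $\delta_t$.} The paper first bounds $\|g_t\|$. For $g_t\neq 0$ it takes the Schur complement of \eqref{opt.1} with respect to the scalar block $\delta_t-\alpha>0$ and tests it against $g_t/\|g_t\|$. This shows that $\delta_t-\alpha$ is at least the positive root of $h(m)=m^2+(g_t^\top H_tg_t/\|g_t\|^2+\alpha)m-\|g_t\|^2$. Combined with $\delta_t-\alpha\le\Lambda\|g_t\|$, this gives $h(\Lambda\|g_t\|)\ge 0$, hence $\|g_t\|\le (L_1+\alpha)\Lambda/(1-\Lambda^2)\le 2(L_1+\alpha)\Lambda$, and then $\delta_t\le\alpha+\Lambda\|g_t\|$. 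The case $g_t=0$ is handled separately.

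Your self-bounding argument uses $\|g_t\|=\|(H_t+\delta_tI)s_t\|\le (L_1+\delta_t)\Lambda$. It reaches the same bound $\delta_t\le\alpha+2(L_1+\alpha)\Lambda^2$ using only $\|H_t\|\le L_1$ and $\delta_t>0$, and it needs no case split. Substituted back, it even reproduces the paper's $\|g_t\|\le (L_1+\alpha)\Lambda/(1-\Lambda^2)$. The paper's version only buys a finer intermediate quantity (the Rayleigh quotient of $H_t$ along $g_t$ instead of $\|H_t\|$), which is discarded anyway.

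\textbf{The final estimates.} The paper passes through the auxiliary quantities $g_{t+1}=g(x_{t+1},y_{t+1})$ and $H_{t+1}=H(x_{t+1},y_{t+1})$, invoking Lemma~\ref{lem.2} at iteration $t+1$. That is where the extra $2\varepsilon_1$, the $\varepsilon_2$ and the second $A$ come from. The intermediate estimate \eqref{gk.gap} produced along the way is reused later in the inexact analysis. You compare directly with $\nabla\mathcal{F}(x_{t+1})$ and with $\nabla^2\mathcal{F}(x_{t+1})=H(x_{t+1},y^*(x_{t+1}))$. This gives sharper constants and avoids relying on a $y_{t+1}$ that the algorithm never computes, since it terminates at this step.

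\textbf{A small correction to your closing remark.} Routing through $\nabla^2\mathcal{F}(x_t)$ would produce the $\varepsilon_2$ (plus $L_2\Lambda=L_H(1+\kappa)\Lambda$), but not the second $A$. In the paper both extra terms come from going through $H_{t+1}$, via $\|y_{t+1}-y_t\|\le 2A+\kappa\|s_t\|$ and $\|\nabla^2\mathcal{F}(x_{t+1})-H_{t+1}\|\le\varepsilon_2$.
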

\begin{proof}
We first examine the case $g_t \neq 0$ to derive an upper bound for $\big\|\nabla \mathcal{F}(x_{t+1})\big\|$. The analysis begins by estimating $\|g_t\|$. Given the condition $|v_t|>\sqrt{1/(1+\Lambda^{2})}$, \eqref{opt.3} implies	
\begin{equation}\label{in_l}
  \|s_t\|
  = \Big\|\frac{u_t}{v_t}\Big\|
  = \frac{\sqrt{1-|v_t|^2}}{|v_t|}
  < \Lambda .
\end{equation}
Combining \eqref{opt.2} with the upper bound \eqref{in_l} yields
\begin{subequations}
  \begin{align}
    H_t s_t + g_t &= -\delta_t s_t, 
    \label{xiaozhi.1}\\[2pt]
    \delta_t - \alpha &= -g_t^{\top} s_t
    \leqslant \|g_t\| \|s_t\|
    \leqslant \Lambda \|g_t\| .
    \label{xiaozhi.2}
  \end{align}
\end{subequations}
Define the quadratic function
\[
h(m) := m^2 + \left( \frac{g_t^{\top} H_t g_t}{\|g_t\|^2} + \alpha \right) m - \|g_t\|^2 .
\]
The equation $h(m) = 0$ has two real roots of opposite signs; denote its positive root by $m_2$.
We now show that $h\left(\delta_t-\alpha\right) \geqslant 0$. To this end, consider the matrix
\[
Q(k) := 
\begin{bmatrix}
H_t + (k+\alpha) I & g_t \\
g_t^{\top}         & k
\end{bmatrix}.
\]
From the optimality condition, we have $Q(\delta_t - \alpha) \succcurlyeq 0$ and $\delta_t-\alpha>0$.
Applying the Schur complement with respect to the scalar block $k=\delta_t-\alpha$ gives
\[
H_t + \delta_t I - \frac{1}{\delta_t-\alpha} g_t g_t^{\top} \succcurlyeq 0.
\]
Premultiplying and postmultiplying the above inequality by the unit vector $g_t / \|g_t\|$ yields
\[
\frac{g_t^{\top} H_t g_t}{\|g_t\|^2} + \delta_t - \frac{\|g_t\|^2}{\delta_t-\alpha} \geqslant 0.
\]
Because $\delta_t - \alpha > 0$, multiplying both sides by $\delta_t - \alpha$ leads to
\begin{equation*}
(\delta_t-\alpha)\!\left(\frac{g_t^{\top} H_t g_t}{\|g_t\|^2} 
      + (\delta_t - \alpha) + \alpha\right) - \|g_t\|^2 
= (\delta_t-\alpha)^2 
   + \left(\frac{g_t^{\top} H_t g_t}{\|g_t\|^2} + \alpha\right)(\delta_t-\alpha)
   - \|g_t\|^2 \geqslant 0,
\end{equation*}
Hence $\delta_t - \alpha \geqslant m_2$ (since $m_2$ is the positive root of $h(m)=0$). 
Combining this with the bound from \eqref{xiaozhi.2}, we obtain
\begin{equation*}
    h(\Lambda \|g_t\|) = \Lambda^2 \|g_t\|^2 + \left( \frac{g_t^{\top} H_t g_t}{\|g_t\|^2} + \alpha \right) \Lambda \|g_t\| - \|g_t\|^2 \geqslant 0 .
\end{equation*}
Rearranging the inequality and using the condition $H_t \preccurlyeq L_1 I$ (i.e. ${g_t^{\top} H_t g_t}/{\|g_t\|^2} \leqslant L_1$) together with $\Lambda \leqslant \sqrt{2}/2$, we obtain
\begin{equation}
    \label{gk.shangjie}
    \|g_t\| 
    \leqslant \frac{\left( \frac{g_t^{\top} H_t g_t}{\|g_t\|^2} + \alpha \right) \Lambda}{1 - \Lambda^2} 
    \leqslant \frac{(L_1 + \alpha)\Lambda}{1 - \Lambda^2} 
    \leqslant 2(L_1 + \alpha)\Lambda .   
\end{equation}
From \eqref{in_l} and \eqref{xiaozhi.2}, it follows that
\begin{equation*}
  \delta_t \|s_t\|
  = \bigl(\alpha + (\delta_t - \alpha)\bigr)\|s_t\|
  \leqslant (\alpha + \Lambda \|g_t\|) \|s_t\|
  \leqslant \alpha \Lambda + \Lambda^2 \|g_t\| .
\end{equation*}
Combining this with \eqref{xiaozhi.1}, we obtain
\begin{equation*}
  \|H_t s_t + g_t\|
  = \delta_t \|s_t\|
  \leqslant \alpha \Lambda + \Lambda^2 \|g_t\| .
\end{equation*}
To bound $\|g_{t+1}\|$, we note that
    \begin{equation*}
            \begin{aligned}
                \|g_{t+1}\| 
                &\leqslant \|g_{t+1} - H_t s_t - g_t\| + \|H_t s_t + g_t\| \notag \\
                &\leqslant \|g_{t+1} - H_t s_t - g_t\| + \alpha \Lambda + \|g_t\| \Lambda^2  \\
                &\leqslant \|g_{t+1} - H_t s_t - g_t\| + \alpha \Lambda + 2(L_1 + \alpha)\Lambda^3,
            \end{aligned}
    \end{equation*}
where the last line uses inequality \eqref{gk.shangjie}. Furthermore, using \eqref{in_l} together with Lemmas~\ref{lem2.2} and \ref{lem.2}, we have
\begin{align}\label{gk.gap}
  \left\|g_{t+1}-H_t s_t-g_t\right\| \leqslant &~ \left\|\nabla \mathcal{F}\left(x_{t+1}\right)-\nabla^2 \mathcal{F}\left(x_t\right) s_t-\nabla \mathcal{F}\left(x_t\right)\right\|  +\left\|g_{t+1}-\nabla \mathcal{F}\left(x_{t+1}\right)\right\| \nonumber\\
  &~ +\left\|\nabla^2 \mathcal{F}\left(x_t\right)-H_t\right\|\left\|s_t\right\| + \left\|\nabla \mathcal{F}\left(x_t\right)-g_t\right\| \nonumber\\
  \leqslant &~ \frac{L_2}{2}\left\|s_t\right\|^2+\varepsilon_2\left\|s_t\right\|+2 \varepsilon_1 \nonumber\\
  \leqslant &~ \frac{L_2}{2}\Lambda^2+\varepsilon_2\Lambda+2 \varepsilon_1 .
\end{align}
Consequently,
\begin{equation*}
    \|g_{t+1}\| \leqslant 2(L_1 + \alpha)\Lambda^3 + \frac{L_2}{2}\Lambda^2 + (\varepsilon_2 + \alpha)\Lambda+2 \varepsilon_1 .
\end{equation*}
Finally, applying Lemma~\ref{lem.2} gives
\begin{equation}\label{exact_upper_1}
    \left\|\nabla \mathcal{F}\left(x_{t+1}\right)\right\| \leqslant \left\|g_{t+1}\right\| + \varepsilon_{1} \leqslant 2(L_1 + \alpha)\Lambda^3 + \frac{L_2}{2}\Lambda^2 + (\varepsilon_2 + \alpha)\Lambda+ 3 \varepsilon_1 .
\end{equation}

We now establish a lower bound for the Hessian $\nabla^2 \mathcal{F}(x_{t+1})$. 
From \eqref{opt.1}, we have $H_t + \delta_t I \succcurlyeq 0$. Combining this with \eqref{xiaozhi.2} and \eqref{gk.shangjie} yields
    \begin{equation}
        \label{Hk.xiajie}
        H_t \succcurlyeq -\delta_t I \succcurlyeq -(\Lambda \|g_t\| + \alpha)I 
            \succcurlyeq -2(L_1 + \alpha)\Lambda^2 I - \alpha I .
    \end{equation}
To bound $H_{t+1}$, we first use the $L_H$-Lipschitz continuity of $H(x,y)$:
\begin{equation*}\label{eq:Htplus1-first}
    \begin{aligned}
        H_{t+1} 
        &\succcurlyeq H_t - \|H_{t+1} - H_t\| I \\
        &\succcurlyeq H_t - L_H\bigl(\|x_{t+1}-x_t\| + \|y_{t+1}-y_t\|\bigr) I .
    \end{aligned}
\end{equation*}
Since $x_{t+1} = x_t + s_t$ with $\|s_t\| < \Lambda$, and using  the $\kappa$-Lipschitz continuity of $y^{\star}(x)$ together with the bound
$\|y_t - y^{\star}(x_t)\| \leqslant A$  from Lemma~\ref{lem.2}, we obtain
\begin{align}
	\label{eq:Htplus1-second}
    H_{t+1}
    &\succcurlyeq H_t - L_H\Bigl(\|s_t\|
    + \|y_{t+1}-y^{\star}(x_{t+1})\|
    + \|y^{\star}(x_{t+1})-y^{\star}(x_t)\|
    + \|y^{\star}(x_t)-y_t\|\Bigr) I \nonumber\\
    &\succcurlyeq H_t - L_H\bigl[(1+\kappa)\|s_t\| + 2A\bigr] I \nonumber\\
    &\succcurlyeq H_t - L_H\bigl[(1+\kappa)\Lambda + 2A\bigr] I .
\end{align}
Substituting the lower bound for $H_t$ from \eqref{Hk.xiajie} into \eqref{eq:Htplus1-second} gives
    \begin{equation}
        H_{t+1} \succcurlyeq -2(L_1 + \alpha)\Lambda^2 I - \alpha I -  L_H\left[(1+\kappa)\Lambda+2A\right] I .
    \end{equation}
Finally, applying the Hessian approximation error from Lemma~\ref{lem.2}, we obtain the desired lower bound for the exact Hessian:
\begin{align}\label{exact_lower_1}
    \nabla^2 \mathcal{F}(x_{t+1})
    &\succcurlyeq H_{t+1} - \varepsilon_{2} I \nonumber\\
    &\succcurlyeq -\Bigl(2(L_1 + \alpha)\Lambda^2 + \alpha + \varepsilon_{2}
    + L_H\bigl[(1+\kappa)\Lambda+2A\bigr]\Bigr) I.
\end{align}

We now consider the case $g_t = 0$. The argument for establishing an upper bound on  $\big\|\nabla \mathcal{F}(x_{t+1})\big\|$ proceeds analogously to the case $g_t \ne 0$ with the simplification that $g_t = 0$. 
Therefore, we obtain
\begin{equation*}
    \|g_{t+1}\| \leqslant \frac{L_2}{2}\Lambda^2 + (\varepsilon_2 + \alpha)\Lambda + 2 \varepsilon_1 .
\end{equation*}
Applying Lemma~\ref{lem.2} then yields
\begin{equation}\label{exact_upper_2}
    \left\|\nabla \mathcal{F}\left(x_{t+1}\right)\right\|
    \leqslant \left\|g_{t+1}\right\| + \varepsilon_{1}
    \leqslant \frac{L_2}{2}\Lambda^2 + (\varepsilon_2 + \alpha)\Lambda + 3 \varepsilon_1 .
\end{equation}
Combining \eqref{exact_upper_1} with \eqref{exact_upper_2}, we have \eqref{eq:stationary-bounds-grad}.
The lower bound for the Hessian proceeds analogously.  Under $g_t = 0$, we have
\begin{equation*}
        H_{t+1} \succcurlyeq - \alpha I -  L_H\bigl[(1+\kappa)\Lambda+2A\bigr] I .
\end{equation*}
Consequently,
\begin{equation}\label{exact_lower_2}
    \nabla^2 \mathcal{F}(x_{t+1})
    \succcurlyeq -\Bigl( \alpha + \varepsilon_{2}
    + L_H\bigl[(1+\kappa)\Lambda+2A\bigr]\Bigr) I .
\end{equation}
Finally, combining \eqref{exact_lower_1}  with the Hessian bound \eqref{exact_lower_2} establishes \eqref{eq:stationary-bounds-hess}, which completes the proof.
\end{proof}

We are now prepared to establish the iteration complexity of the HSDA algorithm. Let $\varepsilon>0$ be the target accuracy. We define the first iteration at which an $\mathcal{O}(\varepsilon,\sqrt{\varepsilon})$-second-order stationary point is reached as
\begin{equation}\label{eq:T-eps}
T(\varepsilon)
:= \min\Bigl\{t \,\Bigm|\,
\|\nabla \mathcal{F}(x_{t+1})\|\leqslant c_1\varepsilon
\ \text{and}\
\nabla^2\mathcal{F}(x_{t+1}) \succcurlyeq -c_2\sqrt{\varepsilon}\,I
\Bigr\},
\end{equation}
where $c_1$ and $c_2$ are positive constants independent of $\varepsilon$. In parallel, we introduce a verifiable stopping index based on the eigenvector component $v_t$:
\begin{equation}\label{eq:Ttilde-eps}
\widetilde T(\varepsilon)
:= \min\Bigl\{t \,\Bigm|\,
|v_t|>\sqrt{1/(1+\Lambda^2)}
\Bigr\}.
\end{equation}
The following theorem shows that once $|v_t|>\sqrt{1/(1+\Lambda^2)}$, the next iterate $x_{t+1}=x_{t}+s_{t}$ is already an  $\mathcal{O}(\varepsilon,\sqrt{\varepsilon})$-second-order stationary point; consequently, 
$T(\varepsilon)\leqslant \widetilde T(\varepsilon)$.
\begin{thm}\label{the.1}
Suppose Assumption~\ref{as.1} holds, and set the parameters as
$
\alpha=\sqrt{L_2\varepsilon},\,
\varepsilon_1=\varepsilon/12,\,
\varepsilon_2=\sqrt{L_2\varepsilon}/12,\,
\Lambda=\sqrt{\varepsilon/L_2},\,
\omega\in(0,1/2)
$
with $0<\varepsilon\leqslant \min\{L_2/2,\,1\}$. Then the iterate $x_{\widetilde T(\varepsilon)+1}$ satisfies
\begin{equation}\label{eq:sosp-bounds}
  \begin{aligned}
    &\|\nabla \mathcal{F}(x_{\widetilde T(\varepsilon)+1})\|
    \leqslant\Bigl(\frac{\sqrt{2}\,L_1}{L_2}+\frac{17}{6}\Bigr)\varepsilon,\\
    &\nabla^2\mathcal{F}(x_{\widetilde T(\varepsilon)+1})
    \succcurlyeq
    -\Bigl[\frac{\sqrt{2}\,L_1}{\sqrt{L_2}}+\frac{13}{6}\sqrt{L_2}
    +\frac{L_H(1+\kappa)}{\sqrt{L_2}}\Bigr]\sqrt{\varepsilon}\,I,
  \end{aligned}
\end{equation}
and furthermore,
\begin{equation}\label{eq:iter-bound}
T(\varepsilon)\ \leqslant\ \widetilde T(\varepsilon)\ \leqslant\
1+\frac{24\sqrt{L_2}}{5}\bigl(\mathcal{F}(x_1)-\mathcal{F}_{\inf}\bigr)\varepsilon^{-3/2}.
\end{equation}
\end{thm}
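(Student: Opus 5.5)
The plan has two parts. First, substitute the parameter choices into Lemma~\ref{lem2.6} at $t=\widetilde T(\varepsilon)$ to get the stationarity bounds \eqref{eq:sosp-bounds}. Second, sum the per-step decrease of Lemma~\ref{lem:step-decrease} over all iterations $t<\widetilde T(\varepsilon)$ and telescope to get \eqref{eq:iter-bound}. Both lemmas need $\Lambda\leqslant\sqrt2/2$. With $\Lambda=\sqrt{\varepsilon/L_2}$ this is equivalent to $\varepsilon\leqslant L_2/2$, which is exactly the hypothesis. We also use $\varepsilon\leqslant 1$ to absorb lower-order powers, e.g. $\varepsilon^{3/2}\leqslant \varepsilon$ and $\varepsilon\leqslant\sqrt\varepsilon$.

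\textbf{Gradient bound.} At $t=\widetilde T(\varepsilon)$ we have $|v_t|>\sqrt{1/(1+\Lambda^2)}$, so Lemma~\ref{lem2.6} applies. Taking each term of \eqref{eq:stationary-bounds-grad} in turn:
\begin{itemize}
\item $2L_1\Lambda^3=2L_1\varepsilon^{3/2}/L_2^{3/2}$. Using $\Lambda\leqslant\sqrt2/2$ once, this is at most $\sqrt2 L_1\Lambda^2=\sqrt2 L_1\varepsilon/L_2$.
\item $2\alpha\Lambda^3=2\varepsilon^2/L_2\leqslant\varepsilon$, using $\varepsilon\leqslant L_2/2$.
\item $\tfrac{L_2}{2}\Lambda^2=\varepsilon/2$.
\item $(\varepsilon_2+\alpha)\Lambda=\tfrac{13}{12}\varepsilon$.
\item $3\varepsilon_1=\varepsilon/4$.
\end{itemize}
The constants sum to $1+\tfrac12+\tfrac{13}{12}+\tfrac14=\tfrac{17}{6}$, as claimed.

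\textbf{Hessian bound.} Taking each term of \eqref{eq:stationary-bounds-hess} in turn:
\begin{itemize}
\item $2L_1\Lambda^2\leqslant \sqrt2 L_1\Lambda=\sqrt2L_1\sqrt\varepsilon/\sqrt{L_2}$.
\item $2\alpha\Lambda^2=2\varepsilon^{3/2}/\sqrt{L_2}\leqslant\sqrt{L_2}\,\sqrt\varepsilon$.
\item $\alpha+\varepsilon_2=\tfrac{13}{12}\sqrt{L_2}\sqrt\varepsilon$.
\item $L_H(1+\kappa)\Lambda=L_H(1+\kappa)\sqrt\varepsilon/\sqrt{L_2}$.
\end{itemize}
The remaining term $2L_HA$ is controlled by the definition $A\leqslant \varepsilon_2/(2L_H)$, so $2L_HA\leqslant\varepsilon_2=\sqrt{L_2}\sqrt\varepsilon/12$. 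The $\sqrt{L_2}$ constants then collect to $1+\tfrac{13}{12}+\tfrac1{12}=\tfrac{13}{6}$.

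The main obstacle is the constant bookkeeping, so that the constants match exactly $\tfrac{17}{6}$ and $\tfrac{13}{6}$. Wherever a cross term appears, I will first try the crude absorptions via $\varepsilon\leqslant L_2/2$ and $\Lambda\leqslant\sqrt2/2$ listed above.

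\textbf{Iteration bound.} For every $t<\widetilde T(\varepsilon)$, the definition \eqref{eq:Ttilde-eps} gives $|v_t|\leqslant\sqrt{1/(1+\Lambda^2)}$. Lemma~\ref{lem:step-decrease} then gives
\[
\mathcal F(x_{t+1})-\mathcal F(x_t)\leqslant \Lambda\varepsilon_1+\tfrac{\Lambda^2}{2}(\varepsilon_2-\alpha)+\tfrac{L_2}{6}\Lambda^3 .
\]
Substituting the parameters, the right-hand side equals
\[
\frac{\varepsilon^{3/2}}{\sqrt{L_2}}\Bigl(\tfrac1{12}-\tfrac{11}{24}+\tfrac16\Bigr)=-\frac{5}{24}\frac{\varepsilon^{3/2}}{\sqrt{L_2}}.
\]
Summing over $t=1,\dots,\widetilde T(\varepsilon)-1$ and telescoping gives
\[
(\widetilde T(\varepsilon)-1)\,\frac{5\varepsilon^{3/2}}{24\sqrt{L_2}}\leqslant \mathcal F(x_1)-\mathcal F_{\inf},
\]
which is \eqref{eq:iter-bound}. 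Finally, $T(\varepsilon)\leqslant\widetilde T(\varepsilon)$ follows from the first part of the argument. Taking $c_1=\sqrt2L_1/L_2+17/6$ and $c_2$ to be the bracketed constant in \eqref{eq:sosp-bounds}, the point $x_{\widetilde T(\varepsilon)+1}$ meets the criterion in \eqref{eq:T-eps}.
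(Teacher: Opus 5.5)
Your proposal is correct and follows the paper's proof essentially step for step. You substitute the parameters into Lemma~\ref{lem2.6} at $t=\widetilde T(\varepsilon)$, absorb the higher-order terms via $\varepsilon\leqslant L_2/2$ (equivalently $\Lambda\leqslant\sqrt2/2$), and telescope the per-step decrease $-\tfrac{5}{24}\varepsilon^{3/2}/\sqrt{L_2}$ from Lemma~\ref{lem:step-decrease}; your constants $\tfrac{17}{6}$ and $\tfrac{13}{6}$ both check out.
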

\begin{proof}
We first prove the stationarity bounds in \eqref{eq:sosp-bounds}.
By definition of $\widetilde T(\varepsilon)$, we have
$|v_{\widetilde T(\varepsilon)}|>\sqrt{1/(1+\Lambda^2)}$.
Applying Lemma~\ref{lem2.6} under this condition yields
\begin{equation*}
\|\nabla \mathcal{F}(x_{\widetilde T(\varepsilon)+1})\|
\leqslant 2(L_1+\alpha)\Lambda^3+\frac{L_2}{2}\Lambda^2+(\varepsilon_2+\alpha)\Lambda+3\varepsilon_1.
\end{equation*}
Using the parameter values specified in Theorem~\ref{the.1}, we substitute into the gradient bound to obtain
\begin{align*}
\|\nabla \mathcal{F}(x_{\widetilde T(\varepsilon)+1})\|
&\leqslant
2\bigl(L_1+\sqrt{L_2\varepsilon}\bigr)\Bigl(\frac{\varepsilon}{L_2}\Bigr)^{3/2}
+\frac{L_2}{2}\Bigl(\frac{\varepsilon}{L_2}\Bigr)
+\Bigl(\frac{\sqrt{L_2\varepsilon}}{12}+\sqrt{L_2\varepsilon}\Bigr)\sqrt{\frac{\varepsilon}{L_2}}
+\frac{\varepsilon}{4} \\
&=
\frac{2L_1}{L_2^{3/2}}\varepsilon^{3/2}
+\frac{2}{L_2}\varepsilon^{2}
+\frac{11}{6}\varepsilon.
\end{align*}
By $0<\varepsilon\leqslant \min\{L_2/2,\,1\}$, we can easily get
\[
\|\nabla \mathcal{F}(x_{\widetilde T(\varepsilon)+1})\|
\leqslant \Bigl(\frac{\sqrt{2}\,L_1}{L_2}+\frac{17}{6}\Bigr)\varepsilon.
\]
Moreover, Lemma~\ref{lem2.6} provides the Hessian lower bound
\begin{equation*}
\nabla^2\mathcal{F}(x_{\widetilde T(\varepsilon)+1})
\succcurlyeq
-\Bigl\{2(L_1+\alpha)\Lambda^2+\alpha+\varepsilon_2
+L_H\bigl[(1+\kappa)\Lambda+2A\bigr]\Bigr\}I,
\end{equation*}
with $A=\min\{\varepsilon_1/\ell_1,\ \varepsilon_2/(2L_H)\}$.
Since $A\leqslant \varepsilon_2/(2L_H)$, we have
$L_H[(1+\kappa)\Lambda+2A]\leqslant L_H(1+\kappa)\Lambda+\varepsilon_2$.
Substituting the parameter choices from Theorem~\ref{the.1} into the above bound gives
\begin{align*}
&~2(L_1+\alpha)\Lambda^2+\alpha+\varepsilon_2
+L_H\bigl[(1+\kappa)\Lambda+2A\bigr] \\
\leqslant
&~2\bigl(L_1+\sqrt{L_2\varepsilon}\bigr)\frac{\varepsilon}{L_2}
+\sqrt{L_2\varepsilon}
+2\cdot\frac{\sqrt{L_2\varepsilon}}{12}
+\frac{L_H(1+\kappa)}{\sqrt{L_2}}\sqrt{\varepsilon}.
\end{align*}
Since $0<\varepsilon\leqslant \min\{L_2/2,\,1\}$, the right-hand side of the previous inequality can be simplified, yielding
\[
\nabla^2\mathcal{F}(x_{\widetilde T(\varepsilon)+1})
\succcurlyeq
-\Bigl[\frac{\sqrt{2}\,L_1}{\sqrt{L_2}}+\frac{13}{6}\sqrt{L_2}
+\frac{L_H(1+\kappa)}{\sqrt{L_2}}\Bigr]\sqrt{\varepsilon}\,I,
\]
which establishes the Hessian bound in \eqref{eq:sosp-bounds}. Consequently, we have
$T(\varepsilon)\leqslant \widetilde T(\varepsilon)$.

We now proceed to bound $\widetilde T(\varepsilon)$.  By the definition of $\widetilde T(\varepsilon)$, for any $t<\widetilde T(\varepsilon)$, we have
$|v_t|\leqslant \sqrt{1/(1+\Lambda^2)}$.
Hence Lemma~\ref{lem:step-decrease} is applicable and gives
\[
\mathcal{F}(x_{t+1})-\mathcal{F}(x_t)\leqslant \Lambda\varepsilon_1+(\varepsilon_2-\alpha)\frac{\Lambda^2}{2}
+\frac{L_2}{6}\Lambda^3.
\]
Substituting the parameter choices from Theorem~\ref{the.1} into this inequality leads to the per‑iteration decrease
\begin{equation}\label{eq:per-dec}
\mathcal{F}(x_{t+1})-\mathcal{F}(x_t)\leqslant -\frac{5}{24}\,\frac{\varepsilon^{3/2}}{\sqrt{L_2}},
\qquad \forall\, t<\widetilde T(\varepsilon).
\end{equation}
Summing \eqref{eq:per-dec} over $t=1,\ldots,\widetilde T(\varepsilon)-1$ and noting that the total possible decrease in $\mathcal{F}$ is at most $\mathcal{F}(x_1)-\mathcal{F}_{\inf}$, we obtain
\[
\mathcal{F}(x_1)-\mathcal{F}_{\inf}\geqslant \sum_{t=1}^{\widetilde T(\varepsilon)-1}\bigl(\mathcal{F}(x_t)-\mathcal{F}(x_{t+1})\bigr)
\geqslant (\widetilde T(\varepsilon)-1)\,\frac{5}{24}\,\frac{\varepsilon^{3/2}}{\sqrt{L_2}}.
\]
Rearranging gives the desired upper bound on $\widetilde T(\varepsilon)$ in \eqref{eq:iter-bound}.
Together with the already proved relation $T(\varepsilon)\leqslant \widetilde T(\varepsilon)$, the proof is complete.
\end{proof}
\begin{rem}
The bound \eqref{eq:iter-bound} in Theorem \ref{the.1} shows that HSDA attains an $\mathcal{O}(\varepsilon,\sqrt{\varepsilon})$-second-order stationary point within $\mathcal{O}(\varepsilon^{-3/2})$ outer iterations. This iteration complexity matches the best known rates for second-order methods in nonconvex-strongly concave minimax optimization; see, for example, \cite{luo2022finding,wang2025gradient}.	
\end{rem}

\section{Inexact Homogeneous Second-Order Descent Ascent Algorithm}\label{sec_ihsda}
The complexity analysis in Section~\ref{HSDA_al} assumes that the homogenized eigenvalue subproblem~\eqref{eq:subprob} is solved exactly at every outer iteration. In practice, however, this assumption can be prohibitive: for large-scale problems, computing the smallest eigenpair of the homogenized matrix $G_t(\alpha)$ typically requires expensive matrix factorizations or many iterations of a Krylov-type eigensolver. To overcome this limitation, we propose in this section an inexact homogeneous second-order descent ascent (IHSDA) algorithm, which solves the homogenized subproblem only approximately via a Lanczos procedure with carefully controlled residual. We prove that IHSDA retains the single‑loop structure of HSDA and achieves the same outer-iteration complexity.

Unlike the exact HSDA method, IHSDA avoids solving the homogenized eigenvalue subproblem \eqref{eq:subprob} exactly. It instead employs a Lanczos procedure to obtain an approximate solution, which comprises two main steps:
\begin{itemize}
  \item \textbf{Inexact Solution via Lanczos Iteration}. The homogenized subproblem \eqref{eq:subprob} is solved approximately using the Lanczos Method with Skewed Randomization \cite[Algorithm~4]{zhang2025homogeneous}. This yields a Ritz pair $(-\zeta_t,[\hat{u}_t;\hat{v}_t])$  of $G_t(\alpha_t)$ with Ritz residual $[k_t;\varrho_t]$,  satisfying 
  \begin{equation}\label{eq:Ritz-residual}
  	G_t(\alpha_t)
  	\begin{bmatrix}
  		\hat{u}_t\\ \hat{v}_t
  	\end{bmatrix}
  	+\zeta_t
  	\begin{bmatrix}
  		\hat{u}_t\\ \hat{v}_t
  	\end{bmatrix}
  	=
  	\begin{bmatrix}
  		k_t\\ \varrho_t
  	\end{bmatrix},
  	\ 
  	|\delta_t-\zeta_t|\leqslant e_t,
  	\ 
  	k_t^{\top}\hat{u}_t + \varrho_t \hat{v}_t = 0, \ \big\|[\hat{u}_t; \hat{v}_t]\big\| = 1,
  \end{equation}
where $-\delta_t:=\lambda_1\!\big(G_t(\alpha_t)\big)$ is the true smallest eigenvalue and the approximation accuracy satisfies $|\delta_t-\zeta_t|\leqslant e_t$  for a prescribed tolerance $e_t$ defined later.
  \item \textbf{Direction Generation and Safeguard.} The search direction $s_t$ is generated from the approximate eigenvector $[\hat{u}_t;\hat{v}_t]$ using the same classification rule as in the exact HSDA algorithm (see \eqref{eq:dk-classification-clean}).
  If $|\hat{v}_t|>\sqrt{1/(1+\Lambda^{2})}$, and the Lanczos residual in \eqref{eq:Ritz-residual} is sufficiently small, the next iterate $x_{t+1}$ can be certified as an
  $\mathcal{O}(\varepsilon,\sqrt{\varepsilon})$-second-order stationary point;
  Otherwise, a large residual triggers an increase of the parameter  $\alpha$ and a re-computation of the Ritz pair to obtain a more accurate approximation of the smallest eigenpair (see Lemma~\ref{lem6.6} for details).
 \end{itemize}
By substituting Step 3 of the HSDA algorithm with the inexact Lanczos procedure described above, we obtain the complete IHSDA algorithm for solving problem \eqref{eq:minimax}. The detailed algorithm is formally stated in Algorithm~\ref{alg:IHSDA}.
\begin{algorithm}
	\caption{Inexact Homogeneous Second-Order Descent Ascent (IHSDA) Algorithm}
	\label{alg:IHSDA}
	\begin{algorithmic}
		\STATE{\textbf{Step 1}: \textbf{Input} $x_1$, $y_0$, $\eta_1>0$, $\eta_2>0$, $L_1>0$, $L_2>0$, $B_g>0$,
			$\omega\in(1/4,1/2)$, $\{N_t\geqslant 1\}$, $\varepsilon>0$, $\Lambda>0$, and set $t=1$.}
		\STATE{\textbf{Step 2}: Update $y_t$:}
		\STATE{\quad\textbf{(2a)}: Set $i=0$, $y_i^{t}= \tilde y_i^{t}=y_{t-1}$.}
		\STATE{\quad\textbf{(2b)}: Update $y_i^{t}$ and $\tilde y_i^{t}$:
			\begin{align*}
				y_{i+1}^{t} &= \tilde y_i^{t} + \eta_{1} \nabla_y f\big(x_t,\tilde y_i^{t}\big),\\
				\tilde y_{i+1}^{t} &= y_{i+1}^{t} + \eta_{2}\big(y_{i+1}^{t}-y_i^{t}\big).
		\end{align*}}
		\STATE{\quad\textbf{(2c)}: If $i \geqslant N_t-1$, set $y_t = y_{N_t}^{t}$ and go to Step 3; otherwise set $i = i+1$ and go to Step (2b).}	
		\STATE{\textbf{Step 3}: Compute
			\begin{equation*}
				g_t = \nabla_x f(x_t,y_t),\qquad
				H_t = \big[\nabla^2_{xx}f - \nabla^2_{xy}f(\nabla^2_{yy}f)^{-1}\nabla^2_{yx}f\big](x_t,y_t).
			\end{equation*}
			Set $e_t=\sqrt{L_2\varepsilon}$ and $\alpha_t=\sqrt{L_2\varepsilon}$, and $G_t(\alpha_t):=\begin{bmatrix} H_t & g_t\\ g_t^{\top} & -\alpha_t\end{bmatrix}$.}
		\STATE{\qquad \textbf{(3a)}    By applying \cite[Algorithm~4]{zhang2025homogeneous} to compute a Ritz pair of $G_t(\alpha_{t})$, i.e., $(-\zeta_t,[\hat{u}_t;\hat{v}_t])$  with Ritz residual $[k_t;\varrho_t]$, which satisfies
			\begin{equation}
				G_t(\alpha_t)
				\begin{bmatrix}
					\hat{u}_t\\ \hat{v}_t
				\end{bmatrix}
				+\zeta_t
				\begin{bmatrix}
					\hat{u}_t\\ \hat{v}_t
				\end{bmatrix}
				=
				\begin{bmatrix}
					k_t\\ \varrho_t
				\end{bmatrix},
				\ 
				|\delta_t-\zeta_t|\leqslant e_t,
				\ 
				k_t^{\top}\hat{u}_t + \varrho_t \hat{v}_t = 0, \ \big\|[\hat{u}_t; \hat{v}_t]\big\| = 1.
			\end{equation}
			If $|\hat v_t|\leqslant \sqrt{1/(1+\Lambda^2)}$, go to Step 4;}
		\STATE{\qquad \textbf{(3b)}  If $\|k_t\|\leqslant \varepsilon/2$, set $x_{t+1}=x_t+\dfrac{\hat u_t}{\hat v_t}$ and \textbf{terminate}. Otherwise, set
			\begin{equation*}
				\alpha_{t}
				= 3\sqrt{L_2\varepsilon}
				+ 2\|g_t\|\Lambda
				+ (L_1+\zeta_t)\Lambda^2,
				\qquad
				e_t
				= \min\!\left\{\frac{\varepsilon}{4},\ \frac{\sqrt{L_2}\,\varepsilon^{5/2}}{64\,(L_1+\alpha_t+B_g)^2}\right\},
			\end{equation*}
			and go to Step~3(a).}
		\STATE{\textbf{Step 4}: Update the direction $s_t$:
			\begin{equation*}
				s_t =
				\begin{cases}
					\dfrac{\hat u_t}{\hat v_t}, & \qquad |\hat v_t|\geqslant \omega ,
					\\[9pt]
					\operatorname{sgn}\!\big(-g_t^{\top}\hat u_t\big)\hat u_t, & \qquad |\hat v_t|<\omega .
				\end{cases}
			\end{equation*}
		}
		
		\STATE{\textbf{Step 5}: Compute $\tau_t=\Lambda/\|s_t\|$, update $x_{t+1} = x_t + \tau_t s_t$, set $t=t+1$, and go to Step~2.}
	\end{algorithmic}
\end{algorithm}

In the following subsection, we prove that IHSDA finds an $\mathcal{O}(\varepsilon,\sqrt{\varepsilon})$-second-order stationary point of $\mathcal{F}(x)$ for problem~\eqref{eq:minimax} with an outer iteration complexity of $\mathcal{O}(\varepsilon^{-3/2})$. Furthermore, we derive a high-probability upper bound on total number of Hessian-vector products  is $\tilde{\mathcal{O}}\big(\varepsilon^{-7/4}\big)$.

\subsection{Complexity Analysis}
For our subsequent analysis, we adopt the following standard assumption commonly used in the complexity analysis of second-order methods~\cite{Cartis2011ARC,Royer2018ComplexityAO}.
\begin{ass}\label{ass:grad-bounded}
  There exists a constant $B_g > 0$, independent of $t$, such that
\begin{equation*}
  \|g(x_t,y_t)\| \leqslant B_g, \qquad \forall\, t \geqslant 1.
\end{equation*}
\end{ass}
We now proceed to derive a quantitative decrease bound for the value function \eqref{eq:value} under the condition $|\hat{v}_t|\leqslant \sqrt{1/(1+\Lambda^{2})}$.
\begin{lem}\label{cor6.1}
Under Assumption~\ref{as.1}, let $\omega\in(1/4,1/2)$ and $\Lambda \leqslant \sqrt{2}/2$. Then for any $\varepsilon>0$, and whenever
$|\hat{v}_t|\leqslant \sqrt{1/(1+\Lambda^{2})}$ the following decrease bound holds with probability at least $1-4p$ (where $p\in(\exp(-n),1)$):
\begin{equation*}
  \mathcal{F}(x_{t+1})-\mathcal{F}(x_t)
  \leqslant 4|\varrho_t| 
   - \frac{\alpha_{t}}{2} \Lambda^2
  + \Lambda \varepsilon_1
  + \frac{\Lambda^{2}}{2} \varepsilon_2
  + \frac{L_{2}}{6} \Lambda^{3}.
\end{equation*}
\end{lem}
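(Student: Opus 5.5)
The plan is to repeat the proof of Lemma~\ref{lem:step-decrease} for the Ritz pair, carrying the residual $[k_t;\varrho_t]$ through every identity. The probability $1-4p$ will enter only through a randomized property of the Lanczos method with skewed randomization \cite[Algorithm~4]{zhang2025homogeneous}. That property guarantees with probability at least $1-4p$ that the Ritz value satisfies $\zeta_t\geqslant \alpha_t$, or at least that the quantity $\alpha_t-\zeta_t$ has the same sign as in the exact case; this is where $p\in(\exp(-n),1)$ is used. I will take this guarantee from that reference as given and condition on it for the rest of the argument.

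\textbf{Step 1: the step length.} Exactly as in Lemma~\ref{lem:step-decrease}, $\|[\hat u_t;\hat v_t]\|=1$ and $|\hat v_t|\leqslant\sqrt{1/(1+\Lambda^2)}$ give $\|s_t\|\geqslant\Lambda$. Hence $\tau_t\in(0,1]$, and $\tau_t\|s_t\|=\Lambda$.

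\textbf{Step 2: the Taylor bound.} Inequality \eqref{guding.3} transfers verbatim, using Lemma~\ref{lem2.2} and Lemma~\ref{lem.2}. It gives
\begin{equation*}
\mathcal{F}(x_{t+1})-\mathcal{F}(x_t)\leqslant E_t+\Lambda\varepsilon_1+\tfrac{\Lambda^2}{2}\varepsilon_2+\tfrac{L_2}{6}\Lambda^3,
\qquad E_t=\tau_t g_t^\top s_t+\tfrac{\tau_t^2}{2}s_t^\top H_ts_t .
\end{equation*}

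\textbf{Step 3: perturbed optimality relations.} The Ritz equation \eqref{eq:Ritz-residual} replaces \eqref{opt.2} by
\begin{equation*}
(H_t+\zeta_tI)\hat u_t=-\hat v_tg_t+k_t,\qquad g_t^\top\hat u_t=\hat v_t(\alpha_t-\zeta_t)+\varrho_t .
\end{equation*}
Combining these with the orthogonality relation $k_t^\top\hat u_t=-\varrho_t\hat v_t$ gives
\begin{equation*}
\hat u_t^\top H_t\hat u_t=-\zeta_t\|\hat u_t\|^2-\hat v_t^2(\alpha_t-\zeta_t)-2\varrho_t\hat v_t .
\end{equation*}
In this way every occurrence of $k_t$ is eliminated, and the only new terms are linear in $\varrho_t$.

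\textbf{Step 4: the case split.} I will treat the two cases separately.

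In the case $|\hat v_t|<\omega$, set $s_t=\pm\hat u_t$ with the sign making $g_t^\top s_t=-|g_t^\top\hat u_t|\leqslant 0$. Then
\begin{equation*}
g_t^\top s_t\leqslant -|\hat v_t|(\zeta_t-\alpha_t)+|\varrho_t| .
\end{equation*}
Also $s_t^\top H_ts_t=-\zeta_t\|s_t\|^2+\hat v_t^2(\zeta_t-\alpha_t)+2|\varrho_t||\hat v_t|$. Since $\hat v_t^2\leqslant|\hat v_t|$ and $\tau_t-\tau_t^2/2\geqslant0$, the terms in $(\zeta_t-\alpha_t)$ combine into a nonpositive quantity. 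What remains is $E_t\leqslant -\tfrac{\zeta_t}{2}\Lambda^2+(\tau_t+\tau_t^2)|\varrho_t|\leqslant -\tfrac{\zeta_t}{2}\Lambda^2+2|\varrho_t|$.

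In the case $\omega\leqslant|\hat v_t|$, set $s_t=\hat u_t/\hat v_t$. Then $g_t^\top s_t=\alpha_t-\zeta_t+\varrho_t/\hat v_t$ and
\begin{equation*}
s_t^\top H_ts_t=-\zeta_t\|s_t\|^2-(\alpha_t-\zeta_t)-2\varrho_t/\hat v_t .
\end{equation*}
Dividing by $|\hat v_t|\geqslant\omega>1/4$ is the reason for the restriction $\omega\in(1/4,1/2)$. It yields $E_t\leqslant(\tau_t-\tau_t^2/2)(\alpha_t-\zeta_t)-\tfrac{\zeta_t}{2}\Lambda^2+4|\varrho_t|$.

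In both cases, using $\zeta_t\geqslant\alpha_t$ from the probabilistic guarantee gives $E_t\leqslant-\tfrac{\alpha_t}{2}\Lambda^2+4|\varrho_t|$. Inserting this into Step 2 proves the stated bound.

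\textbf{Main obstacle.} The delicate point is the sign of $\zeta_t-\alpha_t$. In the exact case it came for free from $\delta_t\geqslant\alpha$, but here only $|\delta_t-\zeta_t|\leqslant e_t$ is available. If the cited Lanczos result yields only $\zeta_t\geqslant\alpha_t-e_t$ rather than $\zeta_t\geqslant\alpha_t$, I would instead try to absorb the deficit $e_t\Lambda^2$ into the residual term. Either way, I need to locate the precise high-probability statement in \cite{zhang2025homogeneous} that produces the factor $1-4p$.
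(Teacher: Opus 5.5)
Your proposal is correct and follows the paper's proof essentially step for step. It uses the same Taylor bound, the same Ritz identities combined with $k_t^\top\hat u_t=-\varrho_t\hat v_t$, the same two-case split with $1/\omega<4$ in the second case, and the same appeal to \cite[Theorems~5 and~6]{zhang2025homogeneous} for $\zeta_t\geqslant\alpha_t$ with probability at least $1-4p$, so the fallback via $\zeta_t\geqslant\alpha_t-e_t$ is not needed. The only cosmetic difference is in the case $|\hat v_t|<\omega$: the paper keeps the term $-(\tau_t-\tfrac12\tau_t^2|\hat v_t|)|g_t^\top\hat u_t|\leqslant 0$, so it needs the sign of $\zeta_t-\alpha_t$ only at the very end and obtains $|\varrho_t|$ rather than your $2|\varrho_t|$, whereas you eliminate $k_t$ first and already use $\zeta_t\geqslant\alpha_t$ there.
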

\begin{proof}
Let $E_t := \tau_t g_t^\top s_t+\frac{\tau_t^2}{2} s_t^\top H_t s_t$.
As in the analysis of HSDA, we have $\tau_t\in(0,1]$ and
\begin{equation}\label{inexact_decrease1}
  \mathcal{F}(x_{t+1})-\mathcal{F}(x_t) \leqslant E_t + \Lambda \varepsilon_1
   + \frac{\Lambda^{2}}{2} \varepsilon_2
   + \frac{L_{2}}{6} \Lambda^{3}.
\end{equation}
{\bfseries Case 1}: $|\hat{v}_t|\leqslant \omega$. From \eqref{eq:Ritz-residual} we obtain
\begin{equation*}
\hat{u}_t^\top H_t \hat{u}_t = k_t^\top \hat{u}_t - \zeta_t \|\hat{u}_t\|^2 - \hat{v}_t g_t^\top \hat{u}_t,
\quad
g_t^\top \hat{u}_t = \varrho_t + \hat{v}_t (\alpha_{t}-\zeta_t).
\end{equation*}
Since $s_t=\operatorname{sgn}(-g_t^\top \hat{u}_t)\hat{u}_t$, we have $\|s_t\|=\|\hat{u}_t\|$ and with $\tau_t\|s_t\|=\Lambda$, also $\tau_t\|\hat{u}_t\|=\Lambda$. Therefore,
\begin{align}\label{inexact_decrease2}
E_t
&= \tau_t g_t^\top s_t + \frac{\tau_t^2}{2} s_t^\top H_t s_t \nonumber\\
&= \tau_t \operatorname{sgn}(-g_t^\top \hat{u}_t) g_t^\top \hat{u}_t
   + \frac{1}{2}\tau_t^2 \hat{u}_t^\top H_t \hat{u}_t \nonumber\\
&= -\tau_t |g_t^\top \hat{u}_t|
   + \frac{1}{2}\tau_t^2 k_t^\top \hat{u}_t
   - \frac{1}{2}\tau_t^2 \hat{v}_t g_t^\top \hat{u}_t
   - \frac{1}{2}\tau_t^2 \zeta_t \|\hat{u}_t\|^2 \nonumber\\
&\leqslant -\tau_t |g_t^\top \hat{u}_t|
   + \frac{1}{2}\tau_t^2 k_t^\top \hat{u}_t
   + \frac{1}{2}\tau_t^2 |\hat{v}_t| |g_t^\top \hat{u}_t|
   - \frac{1}{2}\tau_t^2 \zeta_t \|\hat{u}_t\|^2 \nonumber\\
&= - \frac{1}{2}\tau_t^2 \hat{v}_t \varrho_t
   - \Bigl(\tau_t - \frac{1}{2}\tau_t^2 |\hat{v}_t|\Bigr) |g_t^\top \hat{u}_t|
   - \frac{\zeta_t}{2} \Lambda^2.
\end{align}
Since $\tau_t \leqslant 1$ and $|\hat{v}_t|\leqslant \omega<1$, we have $\tau_t^2 |\hat{v}_t|\leqslant \tau_t\leqslant1$.
Combining this with \eqref{inexact_decrease1} and \eqref{inexact_decrease2} yields
\begin{equation}\label{eq:large-case-b}
\mathcal{F}(x_{t+1})-\mathcal{F}(x_t)
\leqslant |\varrho_t|
- \frac{\zeta_t}{2} \Lambda^2
+ \Lambda \varepsilon_1
+ \frac{\Lambda^{2}}{2} \varepsilon_2
+ \frac{L_{2}}{6} \Lambda^{3}.
\end{equation}
{\bfseries Case 2}: $\omega \leqslant |\hat{v}_t| \leqslant \sqrt{1/(1+\Lambda^2)}$.  Here $s_t=\hat{u}_t/\hat{v}_t$.  Using \eqref{eq:Ritz-residual} we obtain
\begin{equation*}
s_t^\top H_t s_t + g_t^\top s_t
= -\zeta_t \|s_t\|^2 + \frac{k_t^\top \hat{u}_t}{\hat{v}^{2}_t},
\quad
g_t^\top s_t = -\zeta_t + \alpha_{t} + \frac{\varrho_t}{\hat{v}_t}.
\end{equation*}
From the orthogonality relation $k_t^{\top}\hat{u}_t + \varrho_t \hat{v}_t = 0$ in \eqref{eq:Ritz-residual}, it follows that
\begin{align*}
E_t
&= \tau_t g_t^\top s_t + \frac{\tau_t^2}{2} s_t^\top H_t s_t \nonumber\\
&= \tau_t g_t^\top s_t
  + \frac{1}{2}\tau_t^2 \left(\frac{k_t^\top \hat{u}_t}{\hat{v}_t^2} - g_t^\top s_t - \zeta_t \|s_t\|^2 \right) \nonumber\\
&= \left(\tau_t-\frac{1}{2}\tau_t^2\right)\!\left(\frac{\varrho_t}{\hat{v}_t} + \alpha_{t} - \zeta_t\right)
   + \frac{\tau_t^2}{2}\!\left(\frac{k_t^\top \hat{u}_t}{\hat{v}_t^2}\right)
   - \frac{\zeta_t}{2} \Lambda^2 \nonumber\\
&= \left(\tau_t-\frac{1}{2}\tau_t^2\right)(\alpha_{t}-\zeta_t)
   - (\tau_t^2-\tau_t)\frac{\varrho_t}{\hat{v}_t}
   - \frac{\zeta_t}{2} \Lambda^2.
\end{align*}
Since $\tau_t\in(0,1]$ and $|\hat{v}_t|\geqslant \omega \geqslant 1/4$, 
\begin{equation*}
-(\tau_t^2-\tau_t)\frac{\varrho_t}{\hat{v}_t}
\leqslant \left|\frac{\varrho_t}{\omega}\right|
\leqslant 4|\varrho_t|.
\end{equation*}
Hence,
\begin{equation}\label{inexact_decrease3}
  E_t \leqslant \left(\tau_t-\frac{1}{2}\tau_t^2\right)(\alpha_{t}-\zeta_t) + 4|\varrho_t| - \frac{\zeta_t}{2} \Lambda^2.
\end{equation}
Moreover, by Theorems 5 and 6 of~\cite{zhang2025homogeneous}, with probability at least $1-4p$ we have
\begin{equation}\label{inexact_decrease4}
\zeta_t \geqslant \alpha_{t}.
\end{equation}
Substituting \eqref{inexact_decrease3} and \eqref{inexact_decrease4} into \eqref{inexact_decrease1} gives
\begin{equation}\label{inexact_decrease5}
\mathcal{F}(x_{t+1})-\mathcal{F}(x_t)
\leqslant 4|\varrho_t|
- \frac{\zeta_t}{2} \Lambda^2
+ \Lambda \varepsilon_1
+ \frac{\Lambda^{2}}{2} \varepsilon_2
+ \frac{L_{2}}{6} \Lambda^{3}.
\end{equation}
Since \eqref{eq:large-case-b} provides a tighter (i.e., smaller) upper bound, we unify the analysis of both cases by adopting \eqref{inexact_decrease5} as a common estimate. 
Using $\zeta_t \geqslant \alpha_{t}$ from \eqref{inexact_decrease4} (which holds with the stated probability), we obtain
\begin{align*}
\mathcal{F}(x_{t+1})-\mathcal{F}(x_t)
&\leqslant 4|\varrho_t|
   - \frac{\zeta_t}{2} \Lambda^2
   + \Lambda \varepsilon_1
   + \frac{\Lambda^{2}}{2} \varepsilon_2
   + \frac{L_{2}}{6} \Lambda^{3} \\
&\leqslant 4|\varrho_t| 
   - \frac{\alpha_{t}}{2} \Lambda^2
   + \Lambda \varepsilon_1
   + \frac{\Lambda^{2}}{2} \varepsilon_2
   + \frac{L_{2}}{6} \Lambda^{3},
\end{align*}
which completes the proof.
\end{proof}

We now consider the case $|\hat{v}_t| > \sqrt{1/(1+\Lambda^{2})}$. 
The following lemma demonstrates that, under appropriate parameter choices, one of two outcomes must occur with high probability: either next iterate is already an $\mathcal{O}(\varepsilon,\sqrt{\varepsilon})$-second-order stationary point, or, after increasing the parameter $\alpha_{t}$ and re-solving the homogenized eigenvalue subproblem~\eqref{eq:subprob}—the Ritz residual will become sufficiently small. 
\begin{lem}\label{lem6.6}
Under Assumptions \ref{as.1} and \ref{ass:grad-bounded}, consider the case $|\hat{v}_t| > \sqrt{1/(1+\Lambda^2)}$.
Set $\Lambda=\sqrt{\varepsilon/L_{2}}$, $\varepsilon_1=\varepsilon/12$ and $\varepsilon_2=\sqrt{L_2\varepsilon}/12$, 
with $\varepsilon \leqslant \min\Bigl\{{L_{2}^3}/{36}, L_{2}/2, 1\Bigr\}.$
Then the following holds:
\begin{enumerate}[label=(\arabic*)]
\item If the Ritz residual satisfies $\|k_t\|\leqslant \varepsilon/2$, then the next iterate point $x_{t+1}$ is an $\mathcal{O}(\varepsilon,\sqrt{\varepsilon})$-second-order stationary point.
\item Otherwise, whenever $|\hat{v}_t|>\sqrt{1/(1+\Lambda^2)}$, with probability at least $1-4p$, 
we have  $\|k_t\| \leqslant \varepsilon/2.$
\end{enumerate}
\end{lem}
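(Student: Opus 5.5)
Part (1) mirrors Lemma~\ref{lem2.6}, with the exact optimality conditions \eqref{opt} replaced by the Ritz relations \eqref{eq:Ritz-residual}. Write $s_t=\hat u_t/\hat v_t$. Since $|\hat v_t|>\sqrt{1/(1+\Lambda^2)}$ and $\|[\hat u_t;\hat v_t]\|=1$, we get $\|s_t\|<\Lambda$ and $|\hat v_t|\geqslant \sqrt{2/3}$ (using $\Lambda\leqslant\sqrt2/2$). Dividing the first block of \eqref{eq:Ritz-residual} by $\hat v_t$ gives
\[
H_t s_t+g_t=-\zeta_t s_t+k_t/\hat v_t,
\]
and the second block gives $g_t^\top s_t=\alpha_t-\zeta_t+\varrho_t/\hat v_t$.

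For the gradient we estimate
\[
\|g_{t+1}\|\leqslant \|g_{t+1}-H_ts_t-g_t\|+\zeta_t\Lambda+\|k_t\|/|\hat v_t| .
\]
The first term is bounded by \eqref{gk.gap}, i.e.\ $\tfrac{L_2}{2}\Lambda^2+\varepsilon_2\Lambda+2\varepsilon_1=\mathcal O(\varepsilon)$. The last term is at most $\sqrt{3/2}\,\varepsilon/2$ when $\|k_t\|\leqslant\varepsilon/2$. For the middle term we need $\zeta_t=\mathcal O(\sqrt{\varepsilon})$, with constants depending on $L_1,L_2,B_g$. We have $\zeta_t\leqslant\delta_t+e_t$. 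To bound $\delta_t$, use the variational characterization $-\delta_t=\lambda_1(G_t(\alpha_t))$ together with the Rayleigh quotient at $[\hat u_t;\hat v_t]$: since $-\zeta_t$ is that Rayleigh quotient, we get $\delta_t\geqslant\zeta_t$, with equality up to $e_t$. Then bound $\zeta_t$ from the Ritz identity itself:
\[
\zeta_t=\alpha_t-g_t^\top s_t+\varrho_t/\hat v_t\leqslant \alpha_t+\Lambda\|g_t\|+\tfrac32|\varrho_t| .
\]
Here $|\varrho_t|\,|\hat v_t|=|k_t^\top\hat u_t|\leqslant \|k_t\|\,\|\hat u_t\|$, so $|\varrho_t|\leqslant \|k_t\|\Lambda$. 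This is $\mathcal O(\varepsilon\Lambda)$, which is negligible.

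For $\|g_t\|$ I would reproduce the quadratic-root argument of Lemma~\ref{lem2.6}. Premultiply the first block by $g_t^\top/\|g_t\|^2$ and use $H_t\preccurlyeq L_1 I$ to get $\|g_t\|(1-\Lambda^2)\lesssim (L_1+\zeta_t)\Lambda+\|k_t\|/|\hat v_t|$, where the $k_t$ term perturbs things by $\mathcal O(\varepsilon)$. Hence $\|g_t\|=\mathcal O(\sqrt\varepsilon)$, and so $\zeta_t\Lambda=\mathcal O(\varepsilon)$. Adding $\varepsilon_1$ via Lemma~\ref{lem.2} gives $\|\nabla\mathcal F(x_{t+1})\|=\mathcal O(\varepsilon)$.

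For the Hessian, $G_t(\alpha_t)+\delta_t I\succcurlyeq0$ gives $H_t\succcurlyeq-\delta_tI\succcurlyeq-(\zeta_t+e_t)I$. In both phases of Step~3, $e_t\leqslant\sqrt{L_2\varepsilon}$, and we showed $\zeta_t=\mathcal O(\sqrt\varepsilon)$. Then transfer from $H_t$ to $\nabla^2\mathcal F(x_{t+1})$ exactly as in \eqref{eq:Htplus1-second}--\eqref{exact_lower_1}, which adds $L_H[(1+\kappa)\Lambda+2A]+\varepsilon_2$. The condition $\varepsilon\leqslant L_2^3/36$ is used to absorb cross terms such as $\alpha_t\Lambda^2$ and $\zeta_t\Lambda^2$ into $\sqrt\varepsilon$.

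For part (2), after Step~3(b) resets $\alpha_t=3\sqrt{L_2\varepsilon}+2\|g_t\|\Lambda+(L_1+\zeta_t)\Lambda^2$ and tightens $e_t$, I would argue via the Lanczos guarantees of \cite{zhang2025homogeneous} (Theorems~5 and~6), which hold with probability $1-4p$, in three steps:
\begin{enumerate}[label=(\roman*)]
\item For this larger $\alpha_t$, the smallest eigenvalue $-\delta_t$ is separated from the rest of the spectrum of $G_t(\alpha_t)$ by a gap of order $\sqrt{L_2\varepsilon}$. The reason is that increasing $-\alpha_t$ pushes the eigenvector toward $[0;1]$, whose Rayleigh quotient is $-\alpha_t$, while by Cauchy interlacing the other eigenvalues stay at least $\lambda_1(H_t)\geqslant -L_1$ and, more precisely, above $-\delta_t+\sqrt{L_2\varepsilon}$.
\item The standard bound for a Ritz vector, $\|\text{residual}\|^2\leqslant(\text{eigenvalue error})\times\|G_t\|$-type estimates of the form $\|[k_t;\varrho_t]\|^2\lesssim e_t(\|G_t\|+\delta_t)$, together with $\|G_t\|\leqslant L_1+\alpha_t+B_g$, gives $\|k_t\|^2\leqslant 4e_t(L_1+\alpha_t+B_g)$ up to the gap factor.
\item With $e_t\leqslant \sqrt{L_2}\varepsilon^{5/2}/(64(L_1+\alpha_t+B_g)^2)$, this yields $\|k_t\|\leqslant\varepsilon/2$.
\end{enumerate}

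The main obstacle is steps (i)--(ii): turning eigenvalue accuracy into a residual bound requires a spectral gap, and establishing that this specific $\alpha_t$ creates a gap of order $\sqrt\varepsilon$ compatible with the $\varepsilon^{5/2}$ tolerance is delicate. I would first try the inequality $\|r\|^2\leqslant (\lambda_{\rm Ritz}-\lambda_1)(\lambda_{\max}-\lambda_1)$, valid for Ritz vectors from a Krylov subspace containing the top eigendirection, which avoids the gap altogether and only needs $\lambda_{\max}-\lambda_1\leqslant 2(L_1+\alpha_t+B_g)$.
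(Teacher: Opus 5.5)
\textbf{Part (1).} Your plan is sound and close to the paper's, and in two places it is tidier. The paper controls $|\varrho_t|$ by invoking Theorem~6 of \cite{zhang2025homogeneous}, so its part (1) quietly holds only with probability $1-4p$. Your orthogonality bound $|\varrho_t|\leqslant\|k_t\|\,\|s_t\|<\|k_t\|\Lambda$ is deterministic. For the Hessian, the paper goes through $\zeta_t-\alpha_t\leqslant 2\Lambda\|g_t\|+(L_1+\zeta_t)\Lambda^2$ and $\zeta_t\leqslant\|G_t(\alpha_t)\|$, whereas your $H_t\succcurlyeq-\delta_t I\succcurlyeq-(\zeta_t+e_t)I$ is more direct. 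One caution: once $\alpha_t$ has been enlarged in Step~3(b), it contains $\|g_t\|$ itself, so your self-bounding of $\|g_t\|$ becomes circular. Use $\|g_t\|\leqslant B_g$ there, as the paper does. Also, in your display the $\zeta_t$ on the right should read $\alpha_t$ after substitution.

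\textbf{Part (2), step (i).} This step is not established as written. Interlacing and the test vector $[0;1]$ give only $\lambda_2(G_t(\alpha_t))-\lambda_1(G_t(\alpha_t))\geqslant\lambda_1(H_t)+\alpha_t$. To make this at least $\sqrt{L_2\varepsilon}$, the paper extracts a lower bound on $\lambda_1(H_t)$ from the \emph{first} Ritz pair. It uses the Rayleigh identity $-\zeta_t=-\alpha_t\hat v_t^2+2\hat v_t g_t^\top\hat u_t+\hat u_t^\top H_t\hat u_t$ together with $|\hat v_t|>\sqrt{1/(1+\Lambda^2)}$. This yields $\zeta_t-\alpha_t\leqslant 2\Lambda\|g_t\|+(L_1+\zeta_t)\Lambda^2$, and hence $\lambda_1(H_t)\geqslant-(\zeta_t+e_t)\geqslant-\bigl(2\sqrt{L_2\varepsilon}+2\|g_t\|\Lambda+(L_1+\zeta_t)\Lambda^2\bigr)$. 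That is exactly the quantity the update formula for $\alpha_t$ is built to dominate. The paper then applies Lemma~13 of \cite{zhang2025homogeneous}, whose bound carries a factor $\sqrt{e_t/(\lambda_2-\lambda_1)}$. Your heuristic about pushing the eigenvector toward $[0;1]$ does not replace this derivation.

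\textbf{Part (2), your fallback.} The fallback you mention at the end works and makes (i) unnecessary. Indeed, your first formula in (ii), $\|[k_t;\varrho_t]\|^2\lesssim e_t(\|G_t\|+\delta_t)$, is correct with no gap factor at all. The inequality also needs no Krylov or ``top eigendirection'' hypothesis. Take any symmetric $A$ with orthonormal eigenbasis $\{q_i\}$, any unit $x$ with weights $w_i=(x^\top q_i)^2$, Rayleigh quotient $\rho$, and residual $r=Ax-\rho x$. Then $\|r\|^2=\sum_i w_i(\lambda_i-\rho)^2\leqslant(\rho-\lambda_1)(\lambda_{\max}-\rho)$, because $\sum_i w_i(\lambda_i-\lambda_1)(\lambda_{\max}-\lambda_i)\geqslant 0$.

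Apply this to the Ritz pair. The orthogonality $k_t^\top\hat u_t+\varrho_t\hat v_t=0$ gives $\rho=-\zeta_t$. Hence $\rho-\lambda_1=\delta_t-\zeta_t\leqslant e_t$ and $\lambda_{\max}-\rho\leqslant 2\|G_t(\alpha_t)\|\leqslant 2(L_1+\alpha_t+B_g)$. With the tightened $e_t$ this gives $\|k_t\|^2\leqslant\sqrt{L_2\varepsilon}\,\varepsilon^2/\bigl(32(L_1+\alpha_t+B_g)\bigr)\leqslant\varepsilon^2/32$. The probability is inherited from the Lanczos accuracy guarantee.

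\textbf{Comparison.} This route needs no spectral-gap estimate and no external residual lemma. It also shows that, for (2), only the tightening of $e_t$ matters, not the enlargement of $\alpha_t$. The paper's route, in turn, explains why that particular $\alpha_t$ was chosen. So commit to the fallback, drop (i) and the phrase ``up to the gap factor,'' and include the one-line variance argument.
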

\begin{proof}
{\bfseries Proof of (1)}. Without loss of generality, we can assume $\hat v_t>0$, as the sign of the approximate eigenvector
$[\hat u_t;\hat v_t]$ can be flipped without affecting any subsequent derivations.
We show that when $\|k_t\|\leqslant \varepsilon/2$, 
$\lambda_1\!\big(\nabla^2 \mathcal{F}(x_{t+1})\big) \geqslant \mathcal{O}(-\sqrt{\varepsilon})$
and
$\|\nabla \mathcal{F}(x_{t+1})\| \leqslant \mathcal{O}(\varepsilon)$.
From the Ritz condition \eqref{eq:Ritz-residual} we obtain
\begin{equation}\label{eq:ritz-quadratic}
    -\zeta_t
    = -\alpha_{t} \hat{v}_t^2 + 2 \hat{v}_t g_t^\top \hat{u}_t + \hat{u}_t^\top H_t \hat{u}_t .
\end{equation}
Using $\|\hat{u}_t\|^2 + \hat{v}_t^2 = 1$, \eqref{eq:ritz-quadratic} can be rewritten as
\begin{align}
(\zeta_t - \alpha_{t}) \hat{v}_t^2
&= -2 \hat{v}_t g_t^\top \hat{u}_t
   - \Bigl( \zeta_t + \frac{\hat{u}_t^\top H_t \hat{u}_t}{\|\hat{u}_t\|^2} \Bigr) \|\hat{u}_t\|^2 \nonumber\\
&\leqslant 2 \hat{v}_t \sqrt{1-\hat{v}_t^2} \|g_t\|
   - \Bigl( \zeta_t + \frac{\hat{u}_t^\top H_t \hat{u}_t}{\|\hat{u}_t\|^2} \Bigr) \|\hat{u}_t\|^2 \nonumber\\
&\leqslant 2 \hat{v}_t \sqrt{1-\hat{v}_t^2} \|g_t\|
   - \bigl(\zeta_t + \lambda_1(H_t)\bigr) (1-\hat{v}_t^2).\label{eq:zeta-alpha-relation}
\end{align}
Moreover, using $\Lambda \geqslant \sqrt{1-\hat{v}_t^2}/\hat{v}_t$ and
$\lambda_1(H_t) \leqslant L_1$, \eqref{eq:zeta-alpha-relation} yields
\begin{equation}\label{eq:zeta-alpha-upper}
  \zeta_t - \alpha_{t}
  \leqslant 2\Lambda \|g_t\|
  + \bigl|\lambda_1(H_t)+\zeta_t\bigr|\Lambda^2
  \leqslant 2\Lambda \|g_t\|
  + (L_1+\zeta_t)\Lambda^2 .
\end{equation}
Since $H_t+\delta_t I \succcurlyeq 0$ and
$\delta_t \leqslant \zeta_t + e_t \leqslant \zeta_t + \alpha_{t}$, we have
$\lambda_1(H_t) + \delta_t \geqslant 0$.  Combining this with \eqref{eq:zeta-alpha-upper} gives
\begin{equation}\label{lem:6.6.1}
  \lambda_1(H_t)
  + 2\alpha_{t} + 2\|g_t\|\Lambda + (L_1+\zeta_t)\Lambda^2
  \geqslant 0 .
\end{equation}
When $\|k_t\|\leqslant \varepsilon/2$, the argument parallels that of Lemma \ref{lem2.6}: we have 
$\|s_t\| = \|\hat{u}_t/\hat{v}_t\|\leqslant \Lambda$ and
\begin{equation}\label{add:1}
        H_{t+1} \succcurlyeq - \big(2\alpha_{t} + 2\|g_t\|\Lambda + (L_1+\zeta_t)\Lambda^2 +  L_H\left[(1+\kappa)\Lambda+2A\right]\big) I .
\end{equation}
A simple norm estimate yields
\begin{align}
\zeta_t \leqslant \|G_t(\alpha_t)\|
&\leqslant \max_{\|[u;v]\|=1}\left\lvert\!
\begin{bmatrix} u \\ v \end{bmatrix}^{\!\top}
\begin{bmatrix} H_t & 0 \\[2pt] 0 & -\alpha_{t} \end{bmatrix}
\begin{bmatrix} u \\ v \end{bmatrix}\!\right\rvert
+ \max_{\|[u;v]\|=1}\left\lvert\!
\begin{bmatrix} u \\ v \end{bmatrix}^{\!\top}
\begin{bmatrix} 0 & g_t \\[2pt] g_t^\top & 0 \end{bmatrix}
\begin{bmatrix} u \\ v \end{bmatrix} \!\right\rvert\nonumber\\
&\leqslant \max\{L_1,\alpha_{t}\} + \|g_t\|
 \leqslant L_1 + \alpha_{t} + B_g .\label{normbound}
\end{align}
Inserting \eqref{normbound} and the parameter choices into \eqref{add:1}, then applying Lemma \ref{lem.2} (which controls the Hessian approximation error), we obtain after elementary simplifications 
\begin{align*}
\nabla^2 \mathcal{F}(x_{t+1})
&\succcurlyeq
- \big(2\alpha_{t} + 2\|g_t\|\Lambda + (L_1+\zeta_t)\Lambda^2 + \varepsilon_{2}
+ L_H\left[(1+\kappa)\Lambda+2A\right]\big) I \\[0.2em]
&\succcurlyeq
- \big(2\alpha_{t} + 2B_g\Lambda + (2L_1+\alpha_{t}+B_g)\Lambda^2 + 2\varepsilon_{2}
+ L_H(1+\kappa)\Lambda\big) I \\[0.2em]
&=
- \Bigl(\Bigl(\frac{13}{6}\sqrt{L_2} + \frac{2B_g+L_H(1+\kappa)}{\sqrt{L_2}}\Bigr)\sqrt{\varepsilon}
      + \frac{2L_1+B_g}{L_2}\varepsilon
      + \frac{1}{\sqrt{L_2}}\varepsilon^{3/2}\Bigr) I \\[0.2em]
&\succcurlyeq
- \Bigl(\Bigl(\frac{13}{6}\sqrt{L_2} + \frac{2B_g+L_H(1+\kappa)}{\sqrt{L_2}}\Bigr)\sqrt{\varepsilon}
      + \frac{2L_1+B_g}{\sqrt{L_2}}\sqrt{\varepsilon}
      + \sqrt{L_2}\sqrt{\varepsilon}\Bigr) I \\[0.2em]
&=
- \Bigl(\frac{19}{6}\sqrt{L_2}
        + \frac{2L_1+3B_g+L_H(1+\kappa)}{\sqrt{L_2}}\Bigr)\sqrt{\varepsilon}\, I .
\end{align*}
We now bound the gradient norm $\|\nabla \mathcal{F}(x_{t+1})\|$. Using the second‑order Lipschitz continuity of $\nabla^2 \mathcal{F}$, together with \eqref{gk.gap} and
\eqref{eq:Ritz-residual}, we have
\begin{equation}\label{eq:gtplus1-bound}
\begin{aligned}
\|g_{t+1}\|
&\leqslant \|g_{t+1}-g_t - H_t s_t\| + \|g_t + H_t s_t\| \\
&= \|g_{t+1}-g_t - H_t s_t\| + \|k_t/\hat{v}_t - \zeta_t s_t\| \\
&\leqslant \frac{L_2}{2}\Lambda^2 + \varepsilon_2 \Lambda + 2\varepsilon_1
       + \frac{\|k_t\|}{\omega} + |\zeta_t|\Lambda .
\end{aligned}
\end{equation}
Theorem 6 of \cite{zhang2025homogeneous} guarantees that with probability at least $1-4p$, 
$|\varrho_t|\leqslant \varepsilon^2/(16 L_{2}^2)$.
Moreover, \eqref{eq:Ritz-residual} implies the scalar identity
$\zeta_t = \alpha_{t} + {\varrho_t}/{\hat{v}_t} - g_t^\top s_t .$
In addition, in the regime $|\hat{v}_t| > \sqrt{1/(1+\Lambda^2)}$, we have
${1}/{|\hat{v}_t|} < \sqrt{1+\Lambda^2} = \sqrt{1+\varepsilon/L_2} \leqslant \sqrt{2}$,
where we used $0<\varepsilon\leqslant L_2$. 
Hence
\begin{equation*}
  \Big|\frac{\varrho_t}{\hat{v}_t}\Big|
  \leqslant \frac{\varepsilon^2}{16 L_{2}^2}\cdot \frac{1}{|\hat{v}_t|}
  \leqslant \frac{\sqrt{2}\varepsilon^2}{16L_2^2}.
\end{equation*}
Combining this bound with the expression for $\zeta_t$ yields
\begin{equation}\label{eq:zeta-bound}
|\zeta_t|
\leqslant |\alpha_{t}| + \Big|\frac{\varrho_t}{\hat{v}_t}\Big| + |g_t^\top s_t| 
\leqslant \sqrt{L_2\varepsilon}
       + \frac{\sqrt{2}\varepsilon^2}{16L_2^2}
       + B_g \Lambda .
\end{equation}
Substituting \eqref{eq:zeta-bound} and the parameter values into \eqref{eq:gtplus1-bound}, and again invoking Lemma \ref{lem.2} for the gradient error, we arrive at 
\begin{align*}
\|\nabla \mathcal{F}(x_{t+1})\|
&\leqslant \frac{L_2}{2}\Lambda^2 + \varepsilon_2 \Lambda + 3\varepsilon_1
   + \frac{\|k_t\|}{\omega} + |\zeta_t| \Lambda \\[0.1em]
&= \frac{5}{6}\varepsilon + \frac{\|k_t\|}{\omega} + |\zeta_t|\Lambda \\[0.1em]
&\leqslant \frac{5}{6}\varepsilon + \frac{\varepsilon}{2\omega}
   + \Bigl(\sqrt{L_2\varepsilon} + \frac{\sqrt2}{16L_2^2}\varepsilon^2 + B_g\Lambda\Bigr)\Lambda \\[0.2em]
&= \Bigl(\frac{11}{6} + \frac{1}{2\omega} + \frac{B_g}{L_2}\Bigr)\varepsilon
   + \frac{\sqrt2}{16L_2^{5/2}}\varepsilon^{5/2} \\[0.2em]
&\leqslant \Bigl(\frac{23}{6} + \frac{B_g}{L_2} + \frac{\sqrt2}{16L_2}\Bigr)\varepsilon.
\end{align*}
This shows that $x_{t+1}$ is already an $\mathcal{O}(\varepsilon,\sqrt{\varepsilon})$-second-order stationary point.

{\bfseries Proof of (2)}. We first show that
$\lambda_2(G_t(\alpha_t)) - \lambda_1(G_t(\alpha_t)) \geqslant \sqrt{L_{2}\varepsilon}.$
From~\eqref{lem:6.6.1} with the initial choice  $\alpha_{t} = \sqrt{L_{2}\varepsilon}$ we have
\begin{equation}\label{lem:6.6.2}
\lambda_1(H_t) + 2\sqrt{L_{2}\varepsilon} + 2\|g_t\|\Lambda + (L_1+\zeta_t)\Lambda^2 \geqslant 0 .
\end{equation}
After updating $\alpha_t$, i.e., 
$\alpha_t = 3\sqrt{L_{2}\varepsilon} + 2\|g_t\|\Lambda + (L_1+\zeta_t)\Lambda^2,$ 
the Cauchy interlacing theorem together with \eqref{lem:6.6.2} yields
\begin{equation}\label{eq:eig-gap}
 \lambda_2(G_t(\alpha_t)) - \lambda_1(G_t(\alpha_t))
\geqslant \lambda_1(H_t) + \alpha_{t}
\geqslant \sqrt{L_{2}\varepsilon}.
\end{equation}
By Lemma~13 of~\cite{zhang2025homogeneous}, we have 
\begin{equation*}
\|k_t\| \leqslant \phi_t e_t + 2\big(\max\{L_1,\alpha_{t}\} + \|g_t\|\big)\, \sqrt{\frac{e_t}{\lambda_2(G_t(\alpha_t)) - \lambda_1(G_t(\alpha_t))}},
\end{equation*}
where $e_t$  is the prescribed accuracy of the Lanczos run and $\phi_t\in[0,1]$.
Thus, using \eqref{eq:eig-gap}, we have
\begin{equation*}
\begin{aligned}
\|k_t\|
&\leqslant \phi_t e_t + 2\big(\max\{L_1,\alpha_{t}\} + \|g_t\|\big)\, \sqrt{\frac{e_t}{\lambda_2(G_t(\alpha_t)) - \lambda_1(G_t(\alpha_t))}} \\
&\leqslant \phi_t e_t + 2\bigl(L_1 + \alpha_{t} + B_g\bigr)\, \sqrt{\frac{e_t}{\sqrt{L_2\varepsilon}}} \\
&\leqslant \frac{\varepsilon}{2},
\end{aligned}
\end{equation*}
which completes the proof.
\end{proof}

We are now ready to present the main complexity result of this section: a high-probability bound on the iteration complexity of the IHSDA algorithm.  Let $\varepsilon>0$ be the target accuracy. Recall that $T(\varepsilon)$ denotes the first iteration at which an $\mathcal{O}(\varepsilon,\sqrt{\varepsilon})$-second-order stationary point is obtained. Additionally, we define a verifiable stopping index based on the approximate eigenvector component $\hat v_t$:
\begin{equation}\label{eq:That-eps-IHSDA}
\hat{T}(\varepsilon)
:= \min\Bigl\{t \,\Bigm|\,
|\hat v_t|>\sqrt{1/(1+\Lambda^2)}\ \text{and}\ \|k_t\|\leqslant \varepsilon/2
\Bigr\},
\end{equation}
where $k_t$ is as defined in \eqref{eq:Ritz-residual}.
By Lemma~\ref{lem6.6}, $\hat{T}(\varepsilon)$ is finite with high probability and satisfies $T(\varepsilon)\leqslant \hat{T}(\varepsilon)$.
\begin{thm}\label{thm:complexity_inexact_hsodm}
Under Assumptions \ref{as.1} and \ref{ass:grad-bounded}, define
\begin{equation*}
K_\varepsilon
:= 1 + 6\sqrt{L_{2}}\bigl(\mathcal{F}(x_1)-\mathcal{F}_{\inf}\bigr)\,\varepsilon^{-3/2}.
\end{equation*}
Then the outer-iteration counts of IHSDA satisfy
\begin{equation}\label{eq:iter-bound-IHSDA}
T(\varepsilon)\leqslant \hat{T}(\varepsilon)\leqslant K_\varepsilon,
\end{equation}
and with probability at least $(1-4p)^{2K_\varepsilon}$, the algorithm returns an
$\mathcal{O}(\varepsilon,\sqrt{\varepsilon})$-second-order stationary point.
\end{thm}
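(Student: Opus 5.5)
The proof mirrors that of Theorem~\ref{the.1}: a per-iteration decrease argument for iterations before $\hat T(\varepsilon)$, combined with Lemma~\ref{lem6.6} at the stopping iteration. We fix $\Lambda=\sqrt{\varepsilon/L_2}$, $\varepsilon_1=\varepsilon/12$ and $\varepsilon_2=\sqrt{L_2\varepsilon}/12$, with $\varepsilon\leqslant\min\{L_2^3/36,L_2/2,1\}$ as in Lemma~\ref{lem6.6}. This gives $\Lambda\leqslant\sqrt2/2$, so Lemma~\ref{cor6.1} applies.

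\textbf{The inequality $T(\varepsilon)\leqslant\hat T(\varepsilon)$.} At $t=\hat T(\varepsilon)$ we have $|\hat v_t|>\sqrt{1/(1+\Lambda^2)}$ and $\|k_t\|\leqslant\varepsilon/2$. Part (1) of Lemma~\ref{lem6.6} then shows that $x_{t+1}=x_t+\hat u_t/\hat v_t$ is an $\mathcal{O}(\varepsilon,\sqrt\varepsilon)$-second-order stationary point. By the definition \eqref{eq:T-eps}, this gives $T(\varepsilon)\leqslant\hat T(\varepsilon)$.

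\textbf{The bound $\hat T(\varepsilon)\leqslant K_\varepsilon$.} Take $t<\hat T(\varepsilon)$. If $|\hat v_t|>\sqrt{1/(1+\Lambda^2)}$ with $\|k_t\|>\varepsilon/2$, part (2) of Lemma~\ref{lem6.6} shows that after the re-solve in Step~3(b) the residual satisfies $\|k_t\|\leqslant\varepsilon/2$ with high probability. So, on this event, the algorithm either terminates at $t$ or has $|\hat v_t|\leqslant\sqrt{1/(1+\Lambda^2)}$ and moves on to Step~4. Hence every non-terminal iteration $t<\hat T(\varepsilon)$ falls under Lemma~\ref{cor6.1}, with $\alpha_t=\sqrt{L_2\varepsilon}$ (Step~3 resets $\alpha_t$ at the start of each iteration). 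We then insert the residual bound $|\varrho_t|\leqslant\varepsilon^2/(16L_2^2)$, which holds with probability $1-4p$ by Theorem~6 of \cite{zhang2025homogeneous}. The right-hand side of Lemma~\ref{cor6.1} becomes
\[
\frac{\varepsilon^2}{4L_2^2}-\frac{\varepsilon^{3/2}}{2\sqrt{L_2}}+\frac{\varepsilon^{3/2}}{12\sqrt{L_2}}+\frac{\varepsilon^{3/2}}{24\sqrt{L_2}}+\frac{\varepsilon^{3/2}}{6\sqrt{L_2}}
=\frac{\varepsilon^2}{4L_2^2}-\frac{5}{24}\frac{\varepsilon^{3/2}}{\sqrt{L_2}}.
\]
The condition $\varepsilon\leqslant L_2^3/36$ gives $\sqrt\varepsilon\leqslant L_2^{3/2}/6$, hence $\varepsilon^2/(4L_2^2)\leqslant\varepsilon^{3/2}/(24\sqrt{L_2})$. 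The per-step decrease is therefore at least $\tfrac{1}{6}\varepsilon^{3/2}/\sqrt{L_2}$. Telescoping over $t=1,\dots,\hat T(\varepsilon)-1$ and using Assumption~\ref{as.1}(4) gives
\[
(\hat T(\varepsilon)-1)\,\frac{\varepsilon^{3/2}}{6\sqrt{L_2}}\leqslant\mathcal F(x_1)-\mathcal F_{\inf},
\]
that is, $\hat T(\varepsilon)\leqslant K_\varepsilon$.

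\textbf{Probability accounting.} Each iteration uses at most two Lanczos calls: the initial solve, and possibly the re-solve in Step~3(b). The guarantees we rely on are $\zeta_t\geqslant\alpha_t$, the bound on $|\varrho_t|$, and the Lemma~\ref{lem6.6}(2) residual bound. Each holds with probability at least $1-4p$ per call, and the randomness of the calls is independent. Over at most $K_\varepsilon$ iterations this gives a total success probability of at least $(1-4p)^{2K_\varepsilon}$. On this event the bound $\hat T(\varepsilon)\leqslant K_\varepsilon$ is valid and the returned point is stationary.

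\textbf{Expected main obstacle.} The delicate part is making the probabilistic bookkeeping rigorous. We must check that the high-probability statements from \cite{zhang2025homogeneous} hold conditionally on the past for each call, so that they multiply. We must also check that a re-solve in Step~3(b) which lands at $|\hat v_t|\leqslant\sqrt{1/(1+\Lambda^2)}$ still admits the decrease estimate. Here we would note that Lemma~\ref{cor6.1} only becomes better with the larger $\alpha_t$: its bound contains $-\alpha_t\Lambda^2/2$, and the re-solve's tolerance $e_t$ is smaller, so the $\varrho_t$ bound still holds.
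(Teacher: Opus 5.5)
Your proposal is correct and follows essentially the same route as the paper. It obtains the per-iteration decrease of $\tfrac16\varepsilon^{3/2}/\sqrt{L_2}$ from Lemma~\ref{cor6.1} together with $|\varrho_t|\leqslant\varepsilon^2/(16L_2^2)$ and $\varepsilon\leqslant L_2^3/36$, and then telescopes. It uses Lemma~\ref{lem6.6} both for $T(\varepsilon)\leqslant\hat T(\varepsilon)$ and to rule out large $|\hat v_t|$ before $\hat T(\varepsilon)$. It counts at most two Lanczos calls per iteration to get $(1-4p)^{2K_\varepsilon}$. Your remarks on the conditional independence of the calls, and on why the re-solved case with larger $\alpha_t$ (and smaller $e_t$) still gives the decrease, make explicit what the paper only asserts in passing.
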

\begin{proof}
We first bound $\hat{T}(\varepsilon)$.
Fix any outer iteration $t<\hat{T}(\varepsilon)$. By Lemma~\ref{lem6.6}, the Ritz pair used in this iteration satisfies $|\hat{v}_t|\leqslant \sqrt{1/(1+\Lambda^{2})}$.
Recall that
\begin{equation*}
\Lambda = \sqrt{\varepsilon/L_2},\quad
\alpha_{t} = \sqrt{L_2\varepsilon},\quad
\varepsilon_1 = \varepsilon/12,\quad
\varepsilon_2 = \sqrt{L_2\varepsilon}/12,
\end{equation*}
with $0<\varepsilon \leqslant \min\{L_2^3/36,\,L_2/2,\,1\}$.
By Theorem 6 of \cite{zhang2025homogeneous}, with probability at least $1-4p$, we have $|\varrho_t|\leqslant \varepsilon^2/(16L_2^2)$. 
On this high‑probability event, Lemma~\ref{cor6.1} yields
\begin{equation*}
\mathcal{F}(x_{t+1})-\mathcal{F}(x_t)
\leqslant
4|\varrho_t|
- \frac{\alpha_{t}}{2} \Lambda^2
+ \Lambda \varepsilon_1
+ \frac{\Lambda^{2}}{2} \varepsilon_2
+ \frac{L_{2}}{6} \Lambda^{3}.
\end{equation*}
If the Ritz pair is computed with a larger parameter $\alpha_{t}$,  the right‑hand side can only become smaller, so the bound remains valid.
Substituting the parameter values gives
\begin{align*}
\mathcal{F}(x_{t+1})-\mathcal{F}(x_t)
&\leqslant \frac{\varepsilon^2}{4L_2^2}
   - \frac{1}{2}\sqrt{L_2\varepsilon}\,\frac{\varepsilon}{L_2}
   + \sqrt{\frac{\varepsilon}{L_2}}\cdot\frac{\varepsilon}{12}
   + \frac{\varepsilon}{2L_2}\cdot\frac{\sqrt{L_2\varepsilon}}{12}
   + \frac{L_2}{6}\Bigl(\frac{\varepsilon}{L_2}\Bigr)^{3/2} \\
&= -\frac{5}{24}\frac{\varepsilon^{3/2}}{\sqrt{L_2}}
   + \frac{\varepsilon^2}{4L_2^2}
\;\leqslant\;
-\frac{1}{6}\,\frac{\varepsilon^{3/2}}{\sqrt{L_2}},
\end{align*}
where the last inequality uses $\varepsilon\leqslant L_2^3/36$.

Summing this per-iteration decrease over $t=1,\ldots,\hat{T}(\varepsilon)-1$ and noting that the total possible decrease of $\mathcal{F}$
is at most $\mathcal{F}(x_1)-\mathcal{F}_{\inf}$, we obtain
\begin{equation*}
\mathcal{F}(x_1)-\mathcal{F}_{\inf}
\geqslant
\sum_{t=1}^{\hat{T}(\varepsilon)-1}\bigl(\mathcal{F}(x_t)-\mathcal{F}(x_{t+1})\bigr)
\geqslant
(\hat{T}(\varepsilon)-1)\,\frac{1}{6}\,\frac{\varepsilon^{3/2}}{\sqrt{L_2}},
\end{equation*}
and therefore
\begin{equation*}
\hat{T}(\varepsilon)
\leqslant
1 + 6\sqrt{L_2}\bigl(\mathcal{F}(x_1)-\mathcal{F}_{\inf}\bigr)\varepsilon^{-3/2}.
\end{equation*}
By the definition of $\hat{T}(\varepsilon)$ in \eqref{eq:That-eps-IHSDA} together with Lemma~\ref{lem6.6}, we have $T(\varepsilon)\leqslant \hat{T}(\varepsilon)$. Combining this with the bound above establishes \eqref{eq:iter-bound-IHSDA}.

We next establish the high-probability statement. Lemma~\ref{lem6.6} states that whenever 
 $|\hat{v}_t|>\sqrt{1/(1+\Lambda^2)}$, either (i) $\|k_t\|\leqslant \varepsilon/2$ already holds, or (ii) after at most one additional Ritz‑pair computation we obtain $\|k_t\|\leqslant \varepsilon/2$ with probability at least $1-4p$. Hence each outer iteration involves at most two calls to the Lanczos routine. Over at most $K_\varepsilon$ outer iterations, the total number of Lanczos invocations is at most $2K_\varepsilon$. Consequently, the probability that every Lanczos call succeeds is at least $(1-4p)^{2K_\varepsilon}$.
On this event, the definition of $\hat{T}(\varepsilon)$ guarantees that  at iteration
$t=\hat{T}(\varepsilon)$ we have $|\hat{v}_t|>\sqrt{1/(1+\Lambda^2)}$ and $\|k_t\|\leqslant \varepsilon/2$.
Lemma~\ref{lem6.6} then implies that the next iterate is an $\mathcal{O}(\varepsilon,\sqrt{\varepsilon})$-second-order stationary point, which completes the proof.
\end{proof}
\begin{rem}
We remark that, by Bernoulli's inequality, $(1-4p)^{2K_\varepsilon} \geqslant 1-8K_\varepsilon p$ whenever $p<1/4$. Since $p\in(\exp(-n),1)$,  this condition can be satisfied by taking $n$ sufficiently large, for instance, under the mild requirement $n\geqslant \mathcal{O}(-\log\varepsilon)$. Consequently, the high-probability guarantee ``with probability at least $(1-4p)^{2K_\varepsilon}$''  stated in the theorem can be presented equivalently as  ``with probability at least $1-8K_\varepsilon p$'' without losing information.
Moreover, combining the iteration bound \eqref{eq:iter-bound-IHSDA} with the per-call complexity estimates of \cite[Theorem 6 and Lemma 12]{zhang2025homogeneous} yields the total number of Hessian-vector products  bound
\begin{equation*}
  \widetilde{\mathcal{O}}\!\Bigl(L_{2}^{1/4}\bigl(\mathcal{F}(x_1)-\mathcal{F}_{\inf}\bigr)\varepsilon^{-7/4}
  \sqrt{\max\{L_1,\alpha_{t}\}+B_g}\Bigr),
\end{equation*}
where $\widetilde{\mathcal{O}}(\cdot)$ hides logarithmic factors in $n$, $p^{-1}$, and $\varepsilon^{-1}$.
\end{rem}

We now compare the computational effort required to solve the inner subproblems in gradient norm regularized second order methods and in IHSDA, highlighting the structural advantages of the homogeneous formulation.

Algorithms such as IGRTR and ILMNegCur \cite{wang2025gradient} for minimizing $\mathcal{F}(x)$ require solving a regularized Newton system 
\begin{equation}\label{eq:newton-perturbed}
	\bigl(H_t + \varepsilon_N I\bigr)d_t = -g_t,
\end{equation}
where the perturbation parameter $\varepsilon_N$ is chosen on the order of $\|g_t\|^{1/2}$. 
Computing an $\varepsilon$-accurate solution  of \eqref{eq:newton-perturbed}  takes at most 
\[
\mathcal{O}\!\left(
\sqrt{\kappa\bigl(H_t+\varepsilon_N I\bigr)}\,
\log\frac{1}{\varepsilon}
\right), \qquad
\kappa\!\bigl(H_t+\varepsilon_N I\bigr)
:=
\frac{\lambda_{\max}(H_t)+\varepsilon_N}{\lambda_{1}(H_t)+\varepsilon_N},
\]
iterations, and the spectral condition number $\kappa\bigl(H_t+\varepsilon_N I\bigr)$ can become arbitrarily large when $\varepsilon_N\to 0$. 

In contrast, IHSDA replaces the linear system \eqref{eq:newton-perturbed} with the homogenized eigenvalue subproblem defined by the matrix $G_t(\alpha_{t})$ from \eqref{eq:subprob}. The Lanczos method applied to this subproblem requires at most
\[
\mathcal{O}\!\left(
\sqrt{\kappa_L\!\bigl(G_t(\alpha_{t})\bigr)}\,
\log\frac{1}{\varepsilon}
\right), \quad \kappa_L\bigl(G_t(\alpha_{t})\bigr)
:=
\frac{\lambda_{\max}\bigl(G_t(\alpha_{t})\bigr)-\lambda_1\bigl(G_t(\alpha_{t})\bigr)}
{\lambda_2\bigl(G_t(\alpha_{t})\bigr)-\lambda_1\bigl(G_t(\alpha_{t})\bigr)},
\]
iterations to deliver an approximate smallest eigenpair with accuracy $\varepsilon$.
A key advantage of the homogeneous approach is that the Lanczos condition number $\kappa_L\bigl(G_t(\alpha_{t})\bigr)$ is always bounded. Indeed, for any $\alpha_{t}>0$, it follows from \cite[Theorem~2.1]{He2025HomogeneousSD} that 
\begin{equation}\label{eq:kappaL-upper}
	\kappa_L\!\bigl(G_t(\alpha_{t})\bigr)
	\leqslant
	\frac{
		2\bigl(\lambda_{\max}(H_t) - \alpha_{t} - \lambda_1\bigl(G_t(\alpha_{t})\bigr)\bigr)
	}{
		-\lambda_{\max}(H_t) + \alpha_{t}
		+ \sqrt{\bigl(\lambda_{\max}(H_t) + \alpha_{t}\bigr)^2 + \|g_t\|^2/n}
	}
	<\infty.
\end{equation}
Thus, unlike the condition number of the regularized Newton system \eqref{eq:newton-perturbed}, which can blow up as $\varepsilon_N\to 0$, the homogenized subproblem remains well-conditioned for any fixed $\alpha_{t}>0$.

To quantify the improvement, consider the degenerate case  $\lambda_1(H_t)=0$. Then \cite[Theorem~2.1]{He2025HomogeneousSD} also implies 
\begin{equation}\label{eq:kappa-ratio}
	\frac{\kappa_L\!\bigl(G_t(\alpha_{t})\bigr)}
	{\kappa\,\!\bigl(H_t+\varepsilon_N I\bigr)}
	\leqslant
	\mathcal{O}\!\left(
	\frac{\varepsilon_N}{
		\|g_t\|^2/\bigl(\lambda_{\max}(H_t)+\alpha_{t}\bigr)+\alpha_{t}
	}
	\right).
\end{equation}
The ratio \eqref{eq:kappa-ratio} directly compares the conditioning of the two inner subproblems.
In particular, if $\varepsilon_N=\alpha_{t}\to 0$ while $\|g_t\|$ stays bounded away from zero, 
then the right-hand side of \eqref{eq:kappa-ratio} converges to zero, indicating that the homogenized subproblem can be much better conditioned in this regime. 
In contrast, when $\|g_t\|\to 0$ and $\varepsilon_N$ and $\alpha_{t}$ are kept at the same scale, 
the denominator in \eqref{eq:kappa-ratio} is dominated by $\alpha_{t}$ and the ratio remains of constant order, so the two condition numbers are comparable. This behavior aligns with the practical choice $\varepsilon_N=\|g_t\|^{1/2}$ adopted in gradient norm regularized methods\cite{Doikov2024GradientRN,He2025HomogeneousSD,Mishchenko2023RegularizedNM}, yet the homogeneous formulation guarantees a bounded condition number even when the gradient is very small--a regime where Newton-type systems often become ill-conditioned.

\section{Numerical Results}
In this section, we conduct numerical experiments to demonstrate the practical performance of the proposed HSDA algorithm and its inexact variant IHSDA. We compare them with several existing methods: Gradient Descent Ascent (GDA), the IMCN algorithm~\cite{luo2022finding}, the MINIMAX-TRACE algorithm~\cite{yao2024two}, and the  IGRTR algorithm~\cite{wang2025gradient}. The experiments are performed on two representative problem classes: a low-dimensional synthetic nonconvex-strongly concave minimax problem, and an adversarial training task on the MNIST dataset. All codes are implemented in Python 3.11 and executed on a laptop equipped with an Apple M1 processor and 16 GB of memory.
\subsection{A synthetic nonconvex-strongly concave minimax problem}\label{subsec:toy}
We begin with a low-dimensional synthetic nonconvex-strongly concave minimax problem introduced in~\cite{luo2022finding}:
\begin{equation}\label{eq:toy-minimax}
    \min_{x\in\mathbb{R}^3}\max_{y\in\mathbb{R}^2}
    f(x,y)
    = w(x_3) - \frac{y_1^2}{40} + x_1y_1
      - \frac{5y_2^2}{2} + x_2y_2,
\end{equation}
where $x=[x_1,x_2,x_3]^\top$ and $y=[y_1,y_2]^\top$.  

The scalar function $w(\cdot)$ is a nonconvex, W-shaped piecewise cubic function defined by a slope parameter  $\varepsilon>0$ and a length parameter $L>1$:
\begin{equation*}\label{eq:wtoy}
w(x) =
\begin{cases}
\sqrt{\varepsilon}\bigl(x + (L+1)\sqrt{\varepsilon}\bigr)^2
 - \dfrac{1}{3}\bigl(x + (L+1)\sqrt{\varepsilon}\bigr)^3
 - c_\varepsilon,
& x \leqslant -L\sqrt{\varepsilon},\\[0.7ex]
\varepsilon x + \dfrac{\varepsilon^{3/2}}{3},
& -L\sqrt{\varepsilon} < x \leqslant -\sqrt{\varepsilon},\\[0.7ex]
-\sqrt{\varepsilon}x^2 - \dfrac{x^3}{3},
& -\sqrt{\varepsilon} < x \leqslant 0,\\[0.7ex]
-\sqrt{\varepsilon}x^2 + \dfrac{x^3}{3},
& 0 < x \leqslant \sqrt{\varepsilon},\\[0.7ex]
-\varepsilon x + \dfrac{\varepsilon^{3/2}}{3},
& \sqrt{\varepsilon} < x \leqslant L\sqrt{\varepsilon},\\[0.7ex]
\sqrt{\varepsilon}\bigl(x - (L+1)\sqrt{\varepsilon}\bigr)^2
 + \dfrac{1}{3}\bigl(x - (L+1)\sqrt{\varepsilon}\bigr)^3
 - c_\varepsilon,
& L\sqrt{\varepsilon} \leqslant x ,
\end{cases}
\end{equation*}
with $c_\varepsilon := \frac{1}{3}(3L+1)\varepsilon^{3/2}.$

In the experiment, we set $\varepsilon=0.01$, $\mu=0.05$, and $L=5$.  
Two different initial points are used:
\begin{equation*}
  (x_1,y_1)=([0.1,0.1,0.1]^{\top},[0,0]^{\top}),
  \qquad
  (x_2,y_2)=([1.0,0.1,0.1]^{\top},[0,0]^{\top}).
\end{equation*}
The first point $(x_1,y_1)$ lies near the strict saddle point $([0,0,0]^{\top},[0,0]^{\top})$ of \eqref{eq:toy-minimax}, while the second point $(x_2,y_2)$  is intentionally chosen farther from this saddle to examine the algorithms' global behavior. Figure~\ref{fig:wtoy-init1} and Figure ~\ref{fig:wtoy-init2} display the performance of the five algorithms on this problem. The horizontal axis records the iteration index $t$; the left and right vertical axes show the optimality gap $\mathcal{F}(x_t)-\mathcal{F}^{\star}$ and the gradient norm $\|\nabla \mathcal{F}(x_t)\|_2$, respectively.
\begin{figure}[htp]
	\centering 
	\includegraphics[scale=0.6]{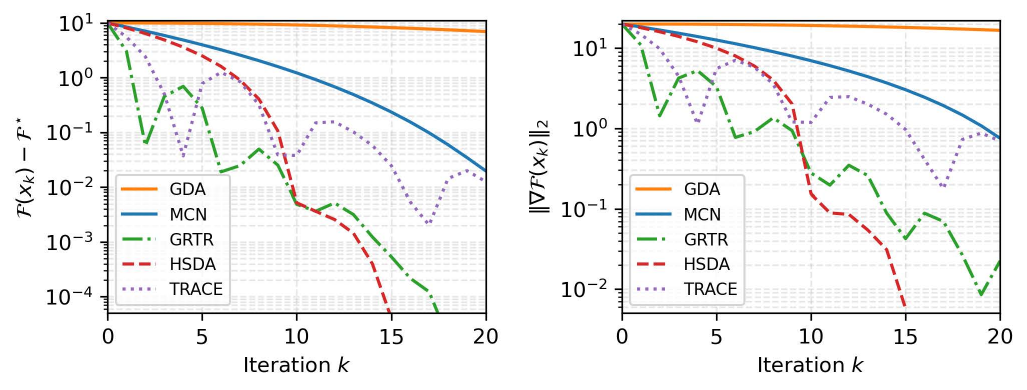}
	\caption{Numerical results of the tested algorithms on the synthetic W-shaped minimax example~\eqref{eq:toy-minimax} with initialization
		$(x_1,y_1)=([0.1,0.1,0.1]^{\top},[0,0]^{\top})$.}
	\label{fig:wtoy-init1}
\end{figure}
\begin{figure}[htp]
	\centering 
	\includegraphics[scale=0.6]{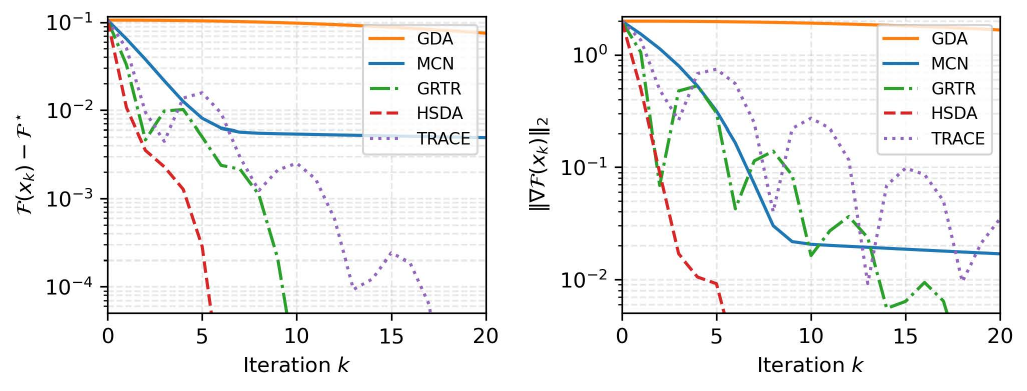}
	\caption{Numerical results of the tested algorithms on the synthetic W-shaped minimax example~\eqref{eq:toy-minimax} with initialization
		$(x_2,y_2)=([1.0,0.1,0.1]^{\top},[0,0]^{\top})$.}
	\label{fig:wtoy-init2}
\end{figure}
Since problem~\eqref{eq:toy-minimax} contains a strict saddle point and the GDA algorithm struggles to escape once trapped near it, the GDA curves in Figure~\ref{fig:wtoy-init1} and Figure ~\ref{fig:wtoy-init2}  remain almost flat, showing very little decrease in either the objective gap or the gradient norm. In contrast, the other four algorithms effectively escape the saddle region and achieve substantial progress. The proposed HSDA algorithm exhibits the fastest decrease in both the optimality gap $\mathcal{F}(x_t)-\mathcal{F}^{\star}$ and the gradient norm 
$\|\nabla \mathcal{F}(x_t)\|_2$. For both initializations, it reduces the objective gap to about 
$10^{-4}$ and the gradient norm to about  $10^{-2}$ within roughly a dozen iterations. The GRTR algorithm also converges rapidly, but its trajectories display more pronounced oscillations. The MCN algorithm reduces these quantities more slowly, yet its convergence path is comparatively smooth. With the chosen parameters, the MINIMAX-TRACE algorithm converges more slowly, and its curves are more oscillatory than those of HSDA and GRTR.

\subsection{Adversarial training on MNIST}\label{subsec:adv-mnist}
We next examine an adversarial training task studied in \cite{chen2021cubic}, whose goal is to train a classifier that remains robust against small input perturbations. Using the MNIST dataset with 50,000 training and 10,000 test samples, we solve the finite-sum minimax problem
\begin{equation}\label{eq:adv-minimax}
  \min_{x}\;\max_{y=\{y_i\}_{i=1}^n}
  \frac{1}{n}\sum_{i=1}^n
  \Bigl[
    \ell\bigl(h_x(y_i),b_i\bigr)
    - \lambda \,\|y_i-a_i\|_2^2
  \Bigr],
\end{equation}
where $x$ collects the parameters of a convolutional neural network
$h_x(\cdot)$, $(a_i,b_i)$ denotes the $i$th image-label pair, and
$y_i\in\mathbb{R}^{784}$ is an adversarial version of $a_i$. Following \cite{chen2021cubic},
we take $\ell$ to be the cross-entropy loss and set $\lambda=2$.

The network $h_x$ is a simple convolutional architecture: a single convolutional block (one input channel, one output channel, kernel size $3$, stride $4$, padding $1$, followed by a sigmoid activation) produces a  $7\times7$ feature map; this map is flattened to a $49$-dimensional vector and passed through a linear layer with $10$ outputs. All network parameters are stacked into a vector $x\in\mathbb{R}^{d_x}$; each adversarial variable $y_i$  is initialized at the original image  $a_i$.

We apply IHSDA together with GDA, IMCN~\cite{luo2022finding},
IGRTR~\cite{wang2025gradient}, and ILMNegCur~\cite{wang2025gradient} to
\eqref{eq:adv-minimax}. All methods are run in mini‑batch mode with batch size $64$.
For the inner maximization over  $y$, every algorithm approximates the maximizer by a limited number of (possibly accelerated) gradient-ascent steps. For IHSDA we set the strong-concavity parameter in the $y$-direction to $\mu=1$, the Lipschitz constant of  $\nabla_y f$ to $\ell=10$, and the Hessian Lipschitz constant of the value function to $L_2=0.2$. The homogeneous second‑order subproblem in each outer iteration is solved approximately by a Lanczos procedure limited to at most $80$ iterations. IMCN is implemented according to the description in \cite{luo2022finding}, while IGRTR and ILMNegCur follow the specifications in 
\cite{wang2025gradient}.

Figure~\ref{fig:adv-mnist} presents the results. Panel~(a)  plots test accuracy against wall-clock time, and panel~(b) shows the approximate objective function value of~\eqref{eq:adv-minimax} versus the outer iteration index. On this problem, the four second-order algorithms achieve higher test accuracies than GDA within the same time budget. IHSDA typically reaches a test accuracy around $80\%$ and attains the lowest objective values among the compared methods. IMCN, IGRTR, and ILMNegCur are competitive and follow closely. GDA improves more gradually and remains below about $70\%$ accuracy over the plotted range. Overall, the curves indicate that exploiting second-order information through the HSDA framework is beneficial for this adversarial training task, and that the resulting IHSDA method performs on par with existing second-order schemes.
\begin{figure}[t]
  \centering
  \includegraphics[width=\textwidth]{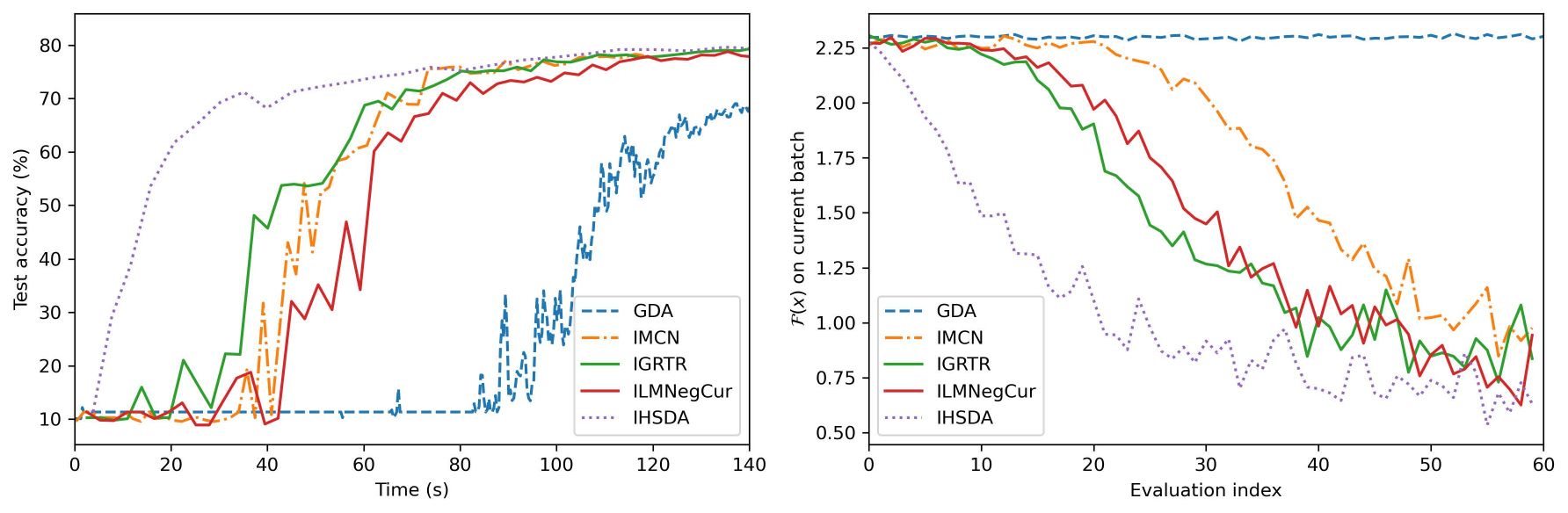}
  \caption{Numerical results of the tested algorithms for solving~\eqref{eq:adv-minimax}.}
  \label{fig:adv-mnist}
\end{figure}
\section{Conclusions}
In this paper, we have introduced a Homogeneous Second-Order Descent Ascent (HSDA) algorithm and its inexact variant (IHSDA) for solving nonconvex-strongly concave minimax problems. The algorithms leverage a homogenized eigenvalue subproblem to compute a search direction that ensures sufficient descent even when the Hessian of the value function is nearly positive semidefinite.

We prove that both HSDA and IHSDA find an $\mathcal{O}(\varepsilon,\sqrt{\varepsilon})$-second-order
stationary point within at most $\tilde{\mathcal{O}}(\varepsilon^{-3/2})$ outer iterations, matching the best known iteration complexity for existing second-order methods in this setting. For the practical IHSDA variant, which solves the subproblem approximately via a Lanczos procedure, we further establish a high-probability bound of $\tilde{\mathcal{O}}(\varepsilon^{-7/4})$ for the total number of Hessian-vector products.

The numerical experiments on synthetic minimax problems and adversarial training tasks confirm the efficiency and robustness of the proposed methods. A natural and promising direction for future work is to extend the homogeneous second-order framework beyond the nonconvex-strongly concave setting, e.g., to more general minimax structures that appear in modern machine learning applications.
 \section*{Data Availability}
No datasets were generated or analysed during the current study.
 \section*{Declarations}
 The authors declare that they have no conflict of interest.


\end{document}